    \let\leq\leqslant
    \let\geq\geqslant
    \let\emptyset\varnothing
\DeclareMathOperator{\col}{col}
\DeclareMathOperator{\rank}{rank}
\let\leq\leqslant
\let\geq\geqslant
\let\emptyset\varnothing
\newcommand{\calA}{\ensuremath{\mathcal{A}}}
\newcommand{\calE}{\ensuremath{\mathcal{E}}}
\newcommand{\calM}{\ensuremath{\mathcal{M}}}
\newcommand{\calO}{\ensuremath{\mathcal{O}}}
\newcommand{\calS}{\ensuremath{\mathcal{S}}}
\newcommand{\calV}{\ensuremath{\mathcal{V}}}
\newcommand{\hatx}{\ensuremath{\hat{x}}}
\newcommand{\haty}{\ensuremath{\hat{y}}}
\newcommand{\hatA}{\ensuremath{\hat{A}}}
\newcommand{\hatB}{\ensuremath{\hat{B}}}
\newcommand{\hatC}{\ensuremath{\hat{C}}}
\newcommand{\hatD}{\ensuremath{\hat{D}}}
\newcommand{\barA}{\ensuremath{\bar{A}}}
\newcommand{\barB}{\ensuremath{\bar{B}}}
\newcommand{\barC}{\ensuremath{\bar{C}}}
\newcommand{\bmat}{\begin{matrix}}
\newcommand{\emat}{\end{matrix}}
\newcommand{\bbm}{\begin{bmatrix}}
\newcommand{\ebm}{\end{bmatrix}}
\newcommand{\bbma}{\begin{bmatrix*}[r]}
\newcommand{\ebma}{\end{bmatrix*}}
\newcommand{\bpm}{\begin{pmatrix}}
\newcommand{\epm}{\end{pmatrix}}
\newcommand{\bvm}{\begin{vmatrix}}
\newcommand{\evm}{\end{vmatrix}}
\newcommand{\bse}{\begin{subequations}}
\newcommand{\ese}{\end{subequations}}
\newcommand{\beq}{\begin{equation}}
\newcommand{\eeq}{\end{equation}}
\newcommand{\ben}{\renewcommand{\labelenumi}{\arabic{enumi}.}
\renewcommand{\theenumi}{\arabic{enumi}}\begin{enumerate}}
\newcommand{\een}{\end{enumerate}}
\newcommand{\bena}{\renewcommand{\labelenumi}{\alph{enumi}.}
\renewcommand{\theenumi}{\alph{enumi}}\begin{enumerate}}
\newcommand{\eena}{\end{enumerate}}
\newcommand{\bit}{\begin{itemize}}
\newcommand{\eit}{\end{itemize}}
\newcommand{\bthe}{\begin{theorem}}
\newcommand{\ethe}{\end{theorem}}
\newcommand{\blem}{\begin{lemma}}
\newcommand{\elem}{\end{lemma}}
\newcommand{\bprop}{\begin{proposition}}
\newcommand{\eprop}{\end{proposition}}
\newcommand{\bex}{\begin{example}}
\newcommand{\eex}{\end{example}}
\newcommand{\bas}{\begin{assumption}}
\newcommand{\eas}{\end{assumption}}
\newcommand{\bre}{\begin{remark}}
\newcommand{\ere}{\end{remark}}
\newcommand{\bcor}{\begin{corollary}}
\newcommand{\ecor}{\end{corollary}}
\newcommand{\bdfn}{\begin{definition}}
\newcommand{\edfn}{\end{definition}}
\newcommand{\bcon}{\begin{conjecture}}
\newcommand{\econ}{\end{conjecture}}
\newcommand{\eps}{\ensuremath{\varepsilon}}
\newcommand{\inv}{\ensuremath{^{-1}}}
\newcommand{\pset}[1]{\ensuremath{\{#1\}}}
\newcommand{\zset}{\ensuremath{\pset{0}}}
\newcommand{\set}[2]{\ensuremath{\left\{#1: #2\right\}}}
\newcommand{\R}{\ensuremath{\mathbb R}}
\newcommand{\C}{\ensuremath{\mathbb C}}
\newcommand{\N}{\ensuremath{\mathbb N}}
\newcommand{\Z}{\ensuremath{\mathbb Z}}
\newcommand{\EP}{\hspace*{\fill} $\square$}
\newcommand{\qand}{\quad\text{ and }\quad}
\newcommand{\nonempty}{\neq\emptyset}
\newcommand{\Rnn}{\R^{n\times n}}
\newcommand{\Rnm}{\R^{n\times m}}
\newcommand{\Rpn}{\R^{p\times n}}
\newcommand{\Rpm}{\R^{p\times m}}
\newcommand{\Rpp}{\R^{p\times p}}
\newcommand{\bz}{\bm{0}}
\newcommand{\ta}{A_{\mathrm{true}}}
\newcommand{\tb}{B_{\mathrm{true}}}
\newcommand{\tc}{C_{\mathrm{true}}}
\newcommand{\td}{D_{\mathrm{true}}}
\newcommand{\du}[2]{u_{[#1,#2]}}
\newcommand{\dy}[2]{y_{[#1,#2]}}
\newcommand{\dx}[2]{x_{[#1,#2]}}
\newcommand{\dhx}[2]{\hatx_{[#1,#2]}}
\newcommand{\dhxf}[2]{\hatx_{[0,T]}}
\newcommand{\duf}{u_{[0,T-1]}}
\newcommand{\dyf}{y_{[0,T-1]}}
\newcommand{\dxf}{x_{[0,T]}}
\newcommand{\dxfp}{x_{[1,T]}}
\newcommand{\dxfpp}{x_{[1,T]}}
\newcommand{\dxfm}{x_{[0,T-1]}}
\newcommand{\dzf}{z_{[0,T]}}
\newcommand{\nt}{n_{\mathrm{true}}}
\newcommand{\lt}{\ell_{\mathrm{true}}}
\newcommand{\nm}{n_{\mathrm{min}}}
\newcommand{\lm}{\ell_{\mathrm{min}}}
\newcommand{\la}{L^{\mathrm{a}}_+}
\newcommand{\ld}{L^{\mathrm{d}}_+}
\newcommand{\hk}[1]{H_{#1}}
\newcommand{\hhk}[1]{G_{#1}}
\newcommand{\spk}{\calS_{\mathrm{pk}}}
\newcommand{\sy}[1]{\calE(#1)}
\newcommand{\syl}[2]{\calE(#1,#2)}
\newcommand{\sln}{\syl{\ell}{n}}
\newcommand{\sa}{\left[\begin{smallmatrix} A & B \\ C & D\end{smallmatrix}\right]}
\newcommand{\strue}{\left[\begin{smallmatrix} \ta & \tb \\ \tc & \td\end{smallmatrix}\right]}
\newcommand{\sah}{\left[\begin{smallmatrix} \hatA & \hatB \\ \hatC & \hatD\end{smallmatrix}\right]}
\newcommand{\sahe}{\left[\begin{smallmatrix} \hatA_\varepsilon & \hatB_\varepsilon \\ \hatC_\varepsilon & \hatD_\varepsilon\end{smallmatrix}\right]}
\newcommand{\sai}{\left[\begin{smallmatrix} A_i & B_i \\ C_i & D_i\end{smallmatrix}\right]}
\newcommand{\sab}{\left[\begin{smallmatrix} \barA_{\varepsilon,\lambda} & \barB \\ \barC & D\end{smallmatrix}\right]}
\newcommand{\sahb}{\left[\begin{smallmatrix} \hatA_{\varepsilon,\lambda} & \hatB_{\varepsilon} \\ \barC & \hatD_{\varepsilon}\end{smallmatrix}\right]}
\newcommand{\sahnz}{\left[\begin{smallmatrix} A_{\lambda}' & 0_{\mu,m} \\ \theta C' & D\end{smallmatrix}\right]}
\newcommand{\sahnzz}{\left[\begin{smallmatrix} \hatA_{\lambda} & \hatB_{\lambda} \\ \hatC & \hatD\end{smallmatrix}\right]}
\newcommand{\struebig}{\left[\begin{array}{c|c} \ta & \tb \\\hline \tc & \td\end{array}\right]}
\newcommand{\lr}[1]{\left(#1\right)}
\DeclareMathOperator{\lk}{lker}
\DeclareMathOperator{\rs}{rsp}
\newcounter{todocounter}
\newcommand*\@bigplus[1]{\vcenter{\hbox{#1$\m@th +$}}}
\newcommand*\bigplus{%
    \DOTSB 
    \mathop{%
        \mathchoice
            {\@bigplus \huge}%
            {\@bigplus \LARGE}%
            {\@bigplus {}}%
            {\@bigplus \footnotesize}%
    }%
    \slimits@ 
}
\newtheorem{theorem}{Theorem}
\newtheorem{lemma}[theorem]{Lemma}
\newtheorem{assumption}[theorem]{Assumption}
\newtheorem{proposition}[theorem]{Proposition}
\newtheorem{corollary}[theorem]{Corollary}
\newtheorem{remark}[theorem]{Remark}
\newcommand{\brem}{\begin{remark}}
\newcommand{\erem}{\end{remark}}
\theoremstyle{definition}
\newtheorem{definition}[theorem]{Definition}
\newtheorem{example}[theorem]{Example}
\newtheorem{problem}[theorem]{Problem}
\newtheorem{conjecture}[theorem]{Conjecture}
\newcommand{\benabc}{\begin{enumerate}[label={\em(\alph*)}, ref=(\alph*)]}
\newcommand{\beni}{\begin{enumerate}[label={\em(\roman*)}, ref=(\roman*)]}
\newcommand{\benabcrm}{\begin{enumerate}[label=(\alph*), ref=(\alph*)]}
\begin{document}
%
\title{Beyond the fundamental lemma:\\
 from finite time series to linear system}
%
%
%

\author[1]{M.~Kanat~Camlibel}
\author[2]{Paolo~Rapisarda}
\affil[1]{Bernoulli Institute, University of Groningen}

\affil[2]{School of Electronics and Computer Science, University of Southampton}

\date{}
\maketitle

\begin{abstract}
We state necessary and sufficient conditions to uniquely identify (modulo state isomorphism) a linear time-invariant minimal input-state-output  system from finite input-output data and upper- and lower bounds on  lag and state space dimension. 
\end{abstract}

%

\section{Introduction}
\noindent{\em {Background.}}
J.C.~Willems'  trilogy \cite{Willems86a,Willems86b,Willems87} is one of broader, deeper, and more influential studies about mathematical modelling of dynamical systems from  time series. The second part \cite{Willems86b} concerns the problem of obtaining a mathematical model for a linear system from a given (infinite) trajectory. It   
significantly influenced  subspace identification methods \cite{Moonen1989}, that compute a state sequence from finite-length data by adapting Willems' state construction  from infinite-  to finite-length data.  Two  assumptions are crucial  in \cite{Moonen1989}: the 
state space dimension of the  system is known; and a rank condition holds for a Hankel matrix constructed from the   data. Although not formally proven at the time, it was  believed that such rank condition is satisfied if the input data are sufficiently persistently  exciting. This conjecture was formally proven in Willems {et. al.}'s \emph{fundamental lemma} (\cite[Thm. 1]{Willems05}) which allows the application of subspace identification even when only an upper bound on the  state  dimension is known. 

The fundamental lemma parameterizes  {\em all\/} trajectories of a system from a {\em single} sufficiently informative one. This parameterization was applied in linear quadratic control \cite{Markovsky2007}, simulation \cite{Markovsky2008}, model reduction of dissipative systems \cite{Rapisarda2011}, and predictive control \cite{Yang2013}. 
More recently, this approach has gained significant momentum, initiated by papers such as
\cite{Maupong2017,Coulson19,DePerzik20,Berberich21} and followed by many  contributions addressing a variety of  data-driven  analysis and control  problems.

The recent surge in popularity of this parameterization has also revived the interest in the fundamental lemma itself. Its original proof was presented in the language of behavioral systems; an alternative proof for state space systems was provided in \cite{Waarde20b}. The original proof and that in \cite{Waarde20b} are by contradiction; a direct proof was presented in \cite{Markovsky23a} for single-input systems. Generalizations to uncontrollable systems are in \cite{Yu21,Mishra21,Markovsky23a} and extensions to continuous-time systems in \cite{Lopez22,Rapisarda23a,Rapisarda23b,Lopez24}. Quantitative/robust variations are explored in \cite{Berberich23,Coulson23}, frequency domain formulations in \cite{Ferizbegovic21,Meijer23}, and online experiment design in \cite{Waarde21}. Furthermore, the fundamental lemma has been generalized beyond linear systems to include various other model classes: descriptor  systems \cite{Schmitz22}, flat nonlinear systems \cite{Alsalti21}, linear parameter-varying systems \cite{Verhoek21}, and stochastic ones \cite{Faulwasser22}.\medskip 

\noindent{\em Contributions.} The fundamental lemma gives a \emph{sufficient} condition for identifiability (in the sense of \cite{Willems05,Markovsky05}); in this paper we investigate {\em necessary and sufficient\/} conditions on {\em finite input-output\/} data for identifiability of an unknown minimal input-state-output (ISO) system whose lag and state space dimension lie between given lower and upper bounds. Our main contributions are:
\begin{itemize}
\item Using the concept of \emph{data informativity} (see \cite{Waarde20a,Waarde23-CSM}), we develop a framework for ISO system identification from finite input-output data, incorporating a priori knowledge or assumptions (Section~\ref{sec:probform}). While currently limited to exact (deterministic) identification problems, such framework is of broader potential interest, for example in approximate modelling and for noisy data. 

\item We characterize (Theorem~\ref{t:nmin}) the shortest lag $\lm$ and the minimum number of states $\nm$, that an ISO system capable of generating the data must have. These integers can be computed \emph{directly from the data}.

\item We compute an ISO system whose lag and state  dimension are exactly $\lm$ and $\nm$ using   a novel construction of a state trajectory from  data (Theorem~\ref{t:state construct}) that does not rely on assumptions on the length of the data set. 

\item We establish an inequality (see \eqref{e: l bound with n lm nm}) relating the lag and state dimension of {\em any\/} data-generating system to $\lm$ and $\nm$. With such inequality we compute a sharper upper bound on the lag than  the one given a priori.

\item Assuming a priori knowledge about minimality and lower and upper bounds on  lag and state dimension, we state \emph{necessary and sufficient} conditions on finite input-output data to guarantee  unique identification modulo state isomorphism (Theorem~\ref{t:main}). Such conditions are formulated in terms of the rank of a Hankel matrix constructed from the input-output data, whose depth  is determined by the  input-output data themselves.  
\end{itemize}\medskip

\noindent{\em Relation with previous results.}
As in deterministic  {subspace identification} methods (see  \cite{Moonen1989,vanOverschee1996,Markovsky05c}), our approach is also based on  directly computing  a state trajectory  from  input-output data. 
Our procedure (see Section \ref{sec:construct state}) is also related to the intersection of ``past" and ``future" crucial in subspace identification  (see \cite[Sect. 2.3.1]{vanOverschee1996}). The issues arising when working with \emph{finite-length} data, however, seem to have been only touched upon in the subspace literature (see \cite[p. 34]{vanOverschee1996} and  before the statement \cite[Thm. 2]{Moonen1989}). It is   assumed that the data length is ``sufficiently large" to guarantee that the data Hankel matrix contains enough information on the dynamics of an explaining model, but a \emph{complete} characterization of such property such as given in the present paper is absent. 

Our approach is conceptually and methodologically closest to the behavioral one (see \cite{Willems86b,Willems05,Markovsky23}). Instrumental to our  results is the definition of a number of integer invariants computed directly from the data and associated with explaining models. Such integers are the finite data counterparts of those introduced in \cite[Sect. 7]{Willems86a} for {\em infinite\/} time series. Moreover, the fundamental lemma  is a \emph{special case} of our  results (see Proposition~\ref{prop:wetal}, Theorem~\ref{t:main}, and Section~\ref{sec:prop to main thm}), and we  show that one can identify the unknown system under {\em weaker\/} conditions. Finally, we essentially improve the fundamental lemma as follows. Identification  based on the fundamental lemma is  {\em offline\/}: a sufficiently rich input signal is applied, the corresponding output response is measured, and then identification or trajectory-parametrization is performed. Such {offline\/} method does not exploit in \emph{real-time} information from the output samples. In \cite{Camlibel24} we use our approach  to devise a {\em shortest online experiment} for linear system identification. Input values are chosen depending on the past input-output data and are applied \emph{step-by-step}. Such  {online\/} method   requires  less data points than the {offline\/} one, and it  could pave the way to design plug-and-play data-driven controllers. 

The authors of \cite{Markovsky23}  study identifiability from multiple finite-length trajectories without assumptions on the input-output structure.  The first  similarity with our approach lies in the adoption of the ``model class'' concept; in \cite{Markovsky23} such class is associated with the \emph{complexity} of a model (see \cite[Section III.B]{Markovsky23}). A  second similarity lies in the sequential construction of left-annihilators  of finite Hankel data matrices, although in \cite{Markovsky23} an autoregressive model is computed and we compute an ISO one instead.  A  fundamental difference is that to compute such annihilators only   data Hankel matrices with at least as many columns as rows are used in \cite{Markovsky23} (see the definition of $L_{\tiny \rm max}$ on p. 3 therein); our informativity point of view instead allows us to exploit Hankel matrices of \emph{full depth} in  constructing  a state sequence compatible with the data. Moreover, the identifiability characterization in \cite[Theorem 17]{Markovsky23} is based on a priori knowledge of  lag and state dimension (see also Note 18 \emph{ibid.}); ours depends on integers computed \emph{only} and \emph{directly} from the data. 
\medskip

\noindent{\em Structure of the paper.} In Section~\ref{sec:notation}, we introduce  notation,  preliminary concepts and definitions. In Section~\ref{sec:probform} we formalize the problem.  Section~\ref{s: summary main results} contains a summary of our most relevant results and an outline of the logical links among them. In Section~\ref{sec:running example} we illustrate such results on an example. 

Our main result is Theorem~\ref{t:main} to whose proof, based on some intermediate results of independent interest, we devote the rest of the manuscript. In Section~\ref{sec:lag struct} we introduce and study lag structures of explaning systems. Section~\ref{sec:construct state} contains a new iterative state construction from input-output data, from which an explaining system is straightforwardly derived. State-input data Hankel matrices and their properties are studied in Section~\ref{sec:SI Hankel}. In Section~\ref{sec: from one to another} we present two  ways of constructing an explaining system from a given one. We discuss future work in Section~\ref{sec:conc}.

\section{Notation and preliminaries}\label{sec:notation}

\subsection{Vectors and matrices}
The space of $n$-dimensional real vectors is denoted by $\R^n$; the space of $n\times m$ matrices with real entries by $\R^{n\times m}$. 

We denote the $n\times n$ identity matrix by $I_n$ and the $m\times n$ zero matrix by $0_{m,n}$ whereas $0_n$ denotes the $n$-vector of zeros. For partitioned matrices containing zero and/or identity submatrices, we do not explicitly indicate the sizes of blocks that can be deduced from the matrix structure.  
  
Given matrices $M_i$, $i=1,2,\ldots,k$ with the same number of columns, we denote $\begin{bmatrix}M_1^T&M_2^T&\cdots&M_k^T  \end{bmatrix}^T$ by $\col\left(M_1,M_2,\ldots,M_k\right)$. Given $M\in\R^{m\times n}$, we denote its {\em kernel\/} by $\ker M:=\set{x\in\R^n}{Mx=0_m}$, its \emph{row space} by $\rs M:=\set{vM}{v\in\R^{1\times m}}$ and its \emph{left kernel} by 
$\lk M:=\set{v\in\R^{1\times m}}{vM=0_{1,n}}.$

We denote the zero subspace of $\R^{1\times n}$ by $\bz_n$.

\subsection{Void matrices} A {\em void matrix\/} is a matrix with zero rows and/or zero columns. We denote by  $0_{n,0}$ and $0_{0,m}$ respectively the $n\times 0$ and $0\times m$ void matrices. If $M$ and $N$ are, respectively $p\times q$ and $q\times r$ matrices, $MN$ is a $p\times r$ void matrix if $p=0$ or $r=0$ and $MN=0_{p,r}$ if $p,r\geq 1$ and $q=0$. The  rank of a void matrix is defined to be zero.

\subsection{Integer intervals and Hankel matrices} 
The set of integers is denoted by $ \Z $ and the set of nonnegative integers by $\N$. 

Given $i,j\in\Z$ with $i\leq j$, we write $[i,j]$ to denote the ordered set of all integers between $i$ and $j$ both included. By convention, $[i,j]=\emptyset$ if $i>j$. 

Let $i,j\in\N$, $i\leq j$; let $f_k\in\R^n$ with $k\in[i,j]$. We define
$$
f_{[i,j]}:=\bbm f_i&f_{i+1}&\cdots& f_j\ebm\in\R^{n\times (j-i+1)}.
$$
For $k\in[0,j-i]$, the \emph{Hankel matrix with depth $k+1$ associated with $f_{[i,j]}$} is defined by:
\[
H_k(f_{[i,j]}):=\bbm f_{i}&\cdots&f_{j-k}\\
\vdots&&\vdots\\
f_{i+k}&\cdots&f_{j}\ebm.
\]

\subsection{Input-state-output systems} We work with linear discrete-time ISO systems
\bse\label{eq: generic linear system}
\begin{align}
\bm x(t+1)&=A \bm x(t)+B\bm u(t)\\
\bm y(t)&=C \bm x(t)+D \bm u(t)
\end{align}
\ese
where $n\geq 0$, $m,p\geq 1$, $A\in\Rnn$, $B\in\Rnm$, $C\in\Rpn$, and $D\in\Rpm$. If $n=0$, we call the system {\em memoryless}.

For $k\geq -1$, we define the \emph{$k$-th observability matrix} by
\beq\label{e:def omega-k}
\Omega_k:=\begin{cases}
0_{0,n}&\mbox{ if } k=-1\\
\bbm \Omega_{k-1}\\CA^{k}\ebm&\mbox{ if } k\geq 0
\end{cases} 
\eeq
We denote the smallest integer $k\geq 0$ such that $\rank \Omega_k=\rank \Omega_{k-1}$ by $\ell(C,A)$. Note that $0\leq \ell(C,A)\leq n$; if $n=0$, then $\ell(C,A)=0$. If $(C,A)$ is observable, then $\ell(C,A)$ is the observability index.  We call $\ell(C,A)$ the \emph{lag} of the system; on  this terminology, see statement (vii) of \cite[Thm. 6]{Willems86a}. 

\subsection{Systems with $m$ inputs and $p$ outputs}
We associate with \eqref{eq: generic linear system} the matrix $\sa$. Given $m\geq 1$ and $p\geq 1$, we denote the set of all systems with lag $\ell$ and $n$ state variables by 
\[
\calS(\ell,n):=\set{\sa\in\R^{(n+p)\times(n+m)}}{\ell(C,A)=\ell}\; .
\]
We also define
\begin{align*}
\calS(n)&:=\set{\sa\in\calS(\ell,n)}{\ell\in[0,n]}\\
\calS&:=\set{\sa\in\calS(n)}{n\geq 0}\\
\calO&:=\set{\sa\in\calS}{(C,A)\text{ is observable}}\\
\calM&:=\set{\sa\in\calO}{(A,B)\text{ is controllable}}.
\end{align*}

\subsection{Isomorphic systems} Two systems $\sai\in\calS(n)$, $i=1,2$  are {\em isomorphic\/} if $D_1=D_2$ and there exists a nonsingular matrix $S\in\Rnn$ such that $A_1=S\inv A_2 S$, $B_1=S\inv B_2$, $C_1=C_2 S$. We say that $\calS'\subseteq\calS(n)$ has the {\em isomorphism property\/} if all systems belonging to $\calS'$ are isomorphic to each other. By convention, the empty set has the isomorphism property. 

\section{Problem formulation}\label{sec:probform}
Consider a linear discrete-time input-state-output system
\bse\label{e:true sys}
\begin{align}
\bm x(t+1)&=\ta \bm x(t)+\tb \bm u(t)\\
\bm y(t)&=\tc \bm x(t)+\td \bm u(t)
\end{align}
\ese
%
where $\nt\geq 0$, $\ta \in\R^{\nt\times \nt}$, $\tb\in\R^{\nt\times m}$, 
$\tc\in\R^{p\times \nt}$, and $\td\in\Rpm$. We refer to \eqref{e:true sys} as the {\em true system}. We denote its lag by $\lt:=\ell(\tc,\ta)$. Throughout the paper, we assume that the true system is minimal, i.e. both observable and controllable.

Let $T\geq 1$ and $(\duf,\dyf)$ be input-output data generated by \eqref{e:true sys}: there exists $\dxf\in\R^{\nt\times(T+1)}$ such that
\beq\label{e:data generated}
\bbm\dxfp\\\dyf\ebm
=\bbm \ta& \tb\\\tc& \td\ebm
\bbm\dxfm\\\duf\ebm\; .
\eeq
A fundamental problem in system identification is under what conditions and how the  system \eqref{e:true sys} can be uniquely determined   from $(\duf,\dyf)$ (up to state  isomorphisms). In this paper we examine such questions assuming {\em prior knowledge\/} about the unknown system \eqref{e:true sys}. To formalize such problem we need to introduce some terminology and notation.

\subsection{Explaining systems}
A system $\sa\in\calS(n)$ {\em explains\/} $(\duf,\dyf)$ if there exists $\dxf\in\R^{n\times(T+1)}$ such that
\beq\label{e:explains}
\bbm\dxfp\\\dyf\ebm
=\bbm A& B\\C& D\ebm
\bbm\dxfm\\\duf\ebm.
\eeq
The set of all systems that explain the data $(\duf,\dyf)$ is denoted by $\calE$ and is called the set of {\em explaining systems}. The subsets of $\calE$ consisting of  systems with a given lag and state space dimension are denoted by
\[
\syl{\ell}{n}:=\calE\cap\calS(\ell,n)\qand
\sy{n}:=\calE\cap\calS(n).
\]
It is straightforward to verify that $\syl{\ell}{n}$ and $\sy{n}$ are invariant under state space transformations. In addition, it follows from \eqref{e:data generated} that $\strue\in\syl{\lt}{\nt}\subseteq\sy{\nt}$.

\subsection{Informativity for system identification}
The available prior knowledge is formalized through a subclass of systems $\spk\subseteq\calS$ (with $\strue\in\spk$), that encapsulates what is known a priori about the true system \eqref{e:true sys}. 

\begin{definition}
\label{def:IWPKC}
The data $(\duf,\dyf)$ {\em are informative for system identification within $\spk$\/} if
\ben[label=\normalfont{(}\roman*\normalfont{)}, ref=(\roman*)]
\item\label{i:first} $\calE\cap\spk=\sy{\nt}\cap\spk$, and
\item\label{i:second} $\calE\cap\spk $  has isomorphism property.
\een
\end{definition}
Condition (i) states that all explaining systems in $\spk$ have $\nt$ states, and (ii)  that they are isomorphic to each other. 


In the rest of the paper, we assume that 
\emph{lower} and \emph{upper bounds} on the \emph{true lag} and \emph{state dimension} are given:
\beq\label{e:bounds on lt and nt}
L_-\leq\lt\leq L_+\leq N_+\text{ and } L_-\leq N_-\leq\nt\leq N_+.
\eeq
Of particular interest are those systems whose lags and state dimensions are within the given lower and upper bounds: 
\[
\calS_{[L_-,L_+],[N_-,N_+]}:=
\bigcup_{\scriptsize
\bmat
\ell\in[L_-,L_+]\\
n\in[N_-,N_+]
\emat
}
\calS(\ell,n).
\] 
The main results of this paper concern the informativity of the data for system identification within the prior knowledge class
\beq\label{e:spk choice min}
\spk=\calS_{[L_-,L_+],[N_-,N_+]}\cap\calM.
\eeq
In this case Definition~\ref{def:IWPKC} can be streamlined. Define
\[
\calE_{[L_-,L_+],[N_-,N_+]}:=
\bigcup_{\scriptsize
\bmat
\ell\in[L_-,L_+]\\
n\in[N_-,N_+]
\emat
}
\calE(\ell,n)\; ;
\]
since
\begin{gather}
\calE\cap\calS_{[L_-,L_+],[N_-,N_+]}=\calE_{[L_-,L_+],[N_-,N_+]}\notag\\
\calE(\nt)\cap\calS_{[L_-,L_+],[N_-,N_+]}=\calE(\nt)\; ,\label{e:trim 2}
\end{gather}
the proof of the following result is straightforward.
\begin{proposition}\label{p:streamline}
The data $(\duf,\dyf)$ are {\em  informative for system identification within $\calS_{[L_-,L_+],[N_-,N_+]}\cap\calM$\/} if and only if
\ben[label=\normalfont{(}\roman*\normalfont{)}, ref=(\roman*)]
\item\label{i:first-2} $\calE_{[L_-,L_+],[N_-,N_+]}\cap\calM=\sy{\nt}\cap\calM$, and
\item\label{i:second-2} $\sy{\nt}\cap\calM$ has isomorphism property.
\een
\end{proposition}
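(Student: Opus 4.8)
The plan is to derive Proposition~\ref{p:streamline} directly from Definition~\ref{def:IWPKC} by substituting the specific choice $\spk=\calS_{[L_-,L_+],[N_-,N_+]}\cap\calM$ and simplifying each of the two conditions using the identities \eqref{e:trim 2} together with the analogous identity for $\calE$ intersected with $\calS_{[L_-,L_+],[N_-,N_+]}$. First I would unwind condition \ref{i:first} of Definition~\ref{def:IWPKC}: the left-hand side is $\calE\cap\spk=\calE\cap\calS_{[L_-,L_+],[N_-,N_+]}\cap\calM=\calE_{[L_-,L_+],[N_-,N_+]}\cap\calM$, where the second equality is the first displayed identity in \eqref{e:trim 2} (intersecting further with $\calM$ on both sides). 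The right-hand side is $\sy{\nt}\cap\spk=\sy{\nt}\cap\calS_{[L_-,L_+],[N_-,N_+]}\cap\calM=\sy{\nt}\cap\calM$, where the second equality uses the second line of \eqref{e:trim 2} (again intersecting with $\calM$). This gives exactly condition \ref{i:first-2}.

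Next I would handle condition \ref{i:second} of Definition~\ref{def:IWPKC}: $\calE\cap\spk$ has the isomorphism property. By the computation just performed, $\calE\cap\spk=\calE_{[L_-,L_+],[N_-,N_+]}\cap\calM$, so this says $\calE_{[L_-,L_+],[N_-,N_+]}\cap\calM$ has the isomorphism property. At first glance this looks weaker than condition \ref{i:second-2}, which asserts the isomorphism property for $\sy{\nt}\cap\calM$. However, one should observe that condition \ref{i:first-2} forces the two sets to coincide; and — crucially — since the equivalence is between the \emph{conjunction} of \ref{i:first-2} and \ref{i:second-2} on one side and the conjunction of \ref{i:first} and \ref{i:second} on the other, we may freely use \ref{i:first-2} (equivalently \ref{i:first}) when translating \ref{i:second}. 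Thus under \ref{i:first-2}, $\calE_{[L_-,L_+],[N_-,N_+]}\cap\calM=\sy{\nt}\cap\calM$, and the isomorphism property of one is the isomorphism property of the other. This closes the equivalence in both directions: assuming \ref{i:first} and \ref{i:second}, we get \ref{i:first-2} from the first paragraph and then \ref{i:second-2} by rewriting the set in \ref{i:second} via \ref{i:first-2}; conversely, assuming \ref{i:first-2} and \ref{i:second-2}, we recover \ref{i:first} by reversing the chain of equalities and \ref{i:second} by the same set rewriting.

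The only genuinely non-routine point — and the one I would flag as the main obstacle, though it is mild — is verifying the two identities collected in \eqref{e:trim 2}, namely $\calE\cap\calS_{[L_-,L_+],[N_-,N_+]}=\calE_{[L_-,L_+],[N_-,N_+]}$ and $\calE(\nt)\cap\calS_{[L_-,L_+],[N_-,N_+]}=\calE(\nt)$. The first is immediate from the definitions of $\calS_{[L_-,L_+],[N_-,N_+]}$ and $\calE_{[L_-,L_+],[N_-,N_+]}$ as the same finite unions, intersected with $\calE$ (distributing intersection over union, and using $\calE(\ell,n)=\calE\cap\calS(\ell,n)$). The second uses that $\strue\in\syl{\lt}{\nt}$ together with the a priori bounds \eqref{e:bounds on lt and nt}, which guarantee $\lt\in[L_-,L_+]$ and $\nt\in[N_-,N_+]$, so that every system in $\calE(\nt)=\bigcup_{\ell\in[0,\nt]}\calE(\ell,\nt)$ with lag $\ell$ in range already lies in $\calS_{[L_-,L_+],[N_-,N_+]}$; one must check that lags $\ell>L_+$ cannot occur among $n=\nt$ explaining systems, but this is exactly the content of $\calE(\nt)\cap\calS_{[L_-,L_+],[N_-,N_+]}=\calE(\nt)$ as asserted, i.e. one takes \eqref{e:trim 2} as given since the excerpt presents it as established. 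Since everything reduces to elementary set manipulations once \eqref{e:trim 2} is in hand, the proof is a few lines, which is consistent with the paper's remark that ``the proof of the following result is straightforward.''
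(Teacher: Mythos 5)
Your proposal is correct and takes essentially the same approach as the paper, which simply declares the result ``straightforward'' once the identities \eqref{e:trim 2} are available: you substitute $\spk=\calS_{[L_-,L_+],[N_-,N_+]}\cap\calM$ into Definition~\ref{def:IWPKC}, rewrite both sides of condition \ref{i:first} using \eqref{e:trim 2}, and correctly note that condition \ref{i:second} of the definition and condition \ref{i:second-2} of the proposition concern the same set once condition \ref{i:first-2} is in force, so they may be interchanged within the conjunction. Your observation that the only substantive content is \eqref{e:trim 2} itself, which the paper asserts without proof and which you legitimately take as given, is also accurate.
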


\begin{problem}\it
Given $T > 0$; $(\duf,\dyf)$ generated by a system \eqref{e:true sys};  $L_{-},L_{+},N_{-},N_{+}\in\N$ satisfying \eqref{e:bounds on lt and nt}; the model class $\spk$ in \eqref{e:spk choice min}, establish necessary and sufficient conditions for informativity for system identification within $\spk$.
\end{problem}

\subsection{Two relevant results}\label{sec:2relevantresults}
 We state \cite[Thm. 1]{Willems05} in our framework. 
\bprop\label{prop:wetal}
Suppose that $\lt\geq 1$ and 
\[
T\geq L_++N_+-1+(L_++N_+)m.
\]
If $ \duf $ is persistently exciting of order $ L_++N_+ $, that is,
\beq\label{e:inp pe}
\rank H_{L_++N_+-1}(\duf)=(L_++N_+)m,
\eeq
then
\[
\rank H_{L_++N_+-1}\left(\begin{bmatrix}
\duf\\\dyf
\end{bmatrix}\right)=(L_++N_+)m+\nt\; ,
\]
and the data $(\duf,\dyf)$ are informative for system identification within $\calS_{[1,L_+],[1,N_+]}\cap\calM$.
\eprop

We state \cite[Th.m 17]{Markovsky23}  in our framework. 
\bprop\label{prop:ivan-florian}
Suppose that $\lt\!\geq\! 1$ and
\[
T\geq \lt+(\lt+1)m+\nt.
\]
Then, the data $(\duf,\dyf)$ are informative for system identification within $\calS(\lt,\nt)\cap\calM$ if and only if 
\[
\rank H_{\lt}\lr{\bbm 
\duf\\
\dyf
\ebm}=(\lt+1)m+\nt\; .
\]
\eprop

%
%
%
%

\section{Main results}\label{s: summary main results}
Informativity for system identification within \eqref{e:spk choice min} is related to the rank of a Hankel matrix constructed from the data. We show that the depth of such Hankel matrix is determined by the data {and} by the   bounds $L_+$ and $N_+$. To state such condition we need to introduce some more notation and terminology.
\subsection{Data Hankel matrices}
For $k\in[0,T-1]$, we denote the \emph{Hankel matrix of depth $k$} constructed from the data $(\duf,\dyf)$ by $\hk{k}$ and the matrix obtained from $\hk{k}$ by deletion of its last $p$ rows by $\hhk{k}$:
\[
H_k:=\bbm
H_k(\duf)\\
H_k(\dyf)
\ebm
=
\begin{bsmallmatrix} 
u_{0}&\cdots&u_{T-k-1}\\
u_{1}&\cdots&u_{T-k}\\[-1mm]
\vdots&&\vdots\\
u_{k}&\cdots&u_{T-1}\\
y_{0}&\cdots&y_{T-k-1}\\
y_{1}&\cdots&y_{T-k}\\[-1mm]
\vdots&&\vdots\\
y_{k}&\cdots&y_{T-1}
\end{bsmallmatrix}=:
\begin{bsmallmatrix} 
&G_k&\\
y_{k}&\cdots&y_{T-1}
\end{bsmallmatrix}\; .
\]
Note that $\hk{k}\in \R^{(k+1)(m+p)\times (T-k)}$  and that $\hhk{k}$ 
is a ${((k+1)m+kp)\times (T-k)}$ matrix. We use the following sequence of integers  to state several intermediate results towards necessary and sufficient conditions for data informativity:  
\beq\label{e: rhok}
\delta_k:=
\begin{cases}
p&\text{if } k=-1\\
\rank H_k-\rank G_k&\text{if } k\in[0,T-1]\; ,
\end{cases}
\eeq
and note that 
\beq\label{e: rhok basic}
p\geq \delta_k\geq 0 \text{ for all } k\in[-1,T-1]\; .
\eeq
Throughout the paper, we assume that 
\[
\duf\neq 0_{m,T}\; ;
\]
then $1=\rank H_{T-1}=\rank G_{T-1}$ and hence 
\beq\label{e: rho T-1}
\delta_{T-1}=0\; .
\eeq

\subsection{Lags and state dimensions of explaining systems}
Let $q\in[0,T-1]$ be the smallest integer such that $\delta_q=0$: 
\beq\label{e:def q}
q:=\min\set{k\in[0,T-1]}{\delta_k=0}.
\eeq
Note that $q$ is well-defined due to \eqref{e: rho T-1}. In our first intermediate result we establish bounds on the lag and state dimension of {\em any\/} explaining system in terms of the  $\delta_k$'s. 
\begin{theorem}\label{t: n lower bound}
Suppose that $\sln\neq \emptyset$. 
The following statements hold:
\benabc
\item\label{t: n lower bound.1} If $T\geq \ell+1$, then $\ell\geq q$. 
\item\label{t: n lower bound.2} If $\ell\geq q$, then $n-\sum_{i=0}^{q} \delta_i\geq \ell-q$.
\end{enumerate}
\end{theorem}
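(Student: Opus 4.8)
Let $\sa\in\sln$, so there is a state trajectory $\dxf\in\R^{n\times(T+1)}$ satisfying \eqref{e:explains}. The key object linking the $\delta_k$'s to the system is the relation between the data Hankel matrices $\hk{k}$, $\hhk{k}$ and the state sequence. Iterating the output equation $p$ times, one obtains for each $k$ a factorization
\[
\hk{k}=\bbm \calT_k\\ \Omega_k\ebm\bbm H_k(\duf)\\ x_{[0,T-k]}\ebm\; ,
\]
where $\Omega_k$ is the observability matrix of $(C,A)$ (defined in \eqref{e:def omega-k}) and $\calT_k$ is the block-Toeplitz matrix of Markov parameters; deleting the last $p$ rows gives an analogous factorization of $\hhk{k}=G_k$ with $\Omega_{k-1}$ in place of $\Omega_k$. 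The plan is to read off $\delta_k=\rank\hk{k}-\rank G_k$ from these factorizations, relate it to $\rank\Omega_k-\rank\Omega_{k-1}$, and then exploit the definition of the lag $\ell=\ell(C,A)$, i.e. the first index where the observability-matrix rank stabilizes.

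**Part (a).** Suppose $T\geq\ell+1$. I would argue by contradiction: if $\ell<q$, then $\delta_\ell\ge 1$ by minimality of $q$. On the other hand, at depth $k=\ell$ the observability rank has already stabilized, $\rank\Omega_\ell=\rank\Omega_{\ell-1}$, so every block-row $CA^\ell$ of $\Omega_\ell$ lies in the row span of $\Omega_{\ell-1}$; combined with the Toeplitz structure this forces the last $p$ rows of $\hk{\ell}$ to lie in the row span of $G_\ell$ together with the input rows already present, hence $\rank\hk{\ell}=\rank G_\ell$, i.e. $\delta_\ell=0$ — a contradiction. The hypothesis $T\ge\ell+1$ is needed so that $\hk{\ell}$ has at least one column, i.e. the factorization is non-vacuous and the rank comparison is meaningful.

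**Part (b).** Now assume $\ell\ge q$. The quantity $\sum_{i=0}^q\delta_i$ telescopes nicely: using $\rank\hk{k}-\rank G_k=\delta_k$ and the fact that $G_k$ is the matrix $\hk{k-1}$ with an extra block-row of inputs appended (so $\rank\hk{k-1}\le\rank G_k\le\rank\hk{k-1}+m$), one shows $\sum_{i=0}^q\delta_i=\rank\hk{q}-\rank H_0(\duf)-(\text{something controlled})$; more to the point, via the factorization $\sum_{i=0}^{q}\delta_i=\rank\Omega_q$, the rank of the $q$-th observability matrix. Since $\ell$ is the stabilization index, for $k$ in the range $q\le k<\ell$ each step still increases the observability rank by at least one, so $\rank\Omega_{\ell-1}\ge\rank\Omega_q+(\ell-1-q)+1=\rank\Omega_q+(\ell-q)$ — wait, more carefully: $\rank\Omega_k$ strictly increases on $[-1,\ell-1]$ and then is constant, so $\rank\Omega_{\ell-1}\ge\rank\Omega_{q-1}+(\ell-q+1)$... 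I would pin down the exact increment count and combine it with $\rank\Omega_{\ell-1}\le n$ (the observability matrix has $n$ columns), yielding $\sum_{i=0}^q\delta_i+(\ell-q)\le n$, which is the claim $n-\sum_{i=0}^q\delta_i\ge\ell-q$.

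**Main obstacle.** The technical heart — and the step I expect to be most delicate — is establishing the exact identity $\sum_{i=0}^{k}\delta_i=\rank\Omega_k$ (or the appropriate inequality) from the Hankel factorization. This requires care with (i) the possibility that $\rank H_k(\duf)<(k+1)m$, so the input Hankel blocks are themselves rank-deficient and the "extra" state information contributed at each depth is not simply $\delta_k=\rank\Omega_k-\rank\Omega_{k-1}$ but only bounded by it; (ii) the void-matrix conventions when $n=0$ or when columns run out; and (iii) making the telescoping rigorous rather than heuristic. I would isolate this as a lemma: for every $k\in[0,T-1]$ with $\hk{k}$ having a column, $\delta_k$ equals the number of new directions $CA^k$ adds to $\rs\Omega_{k-1}$ modulo the input row span, and this is $\le\rank\Omega_k-\rank\Omega_{k-1}$, with equality once $k\le\ell-1$ is within the "persistently exciting enough" regime — though for part (b) only the summed inequality is needed, so I would aim for the cleaner bound $\sum_{i=0}^q\delta_i\le\rank\Omega_q$ and then $\rank\Omega_q\le n-(\ell-q)$ from strict monotonicity of observability ranks up to index $\ell$.
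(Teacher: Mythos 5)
Your part~(a) is sound and is essentially the paper's argument: the point is that $\rho_\ell:=\rank\Omega_\ell-\rank\Omega_{\ell-1}=0$ forces $\delta_\ell=0$, which the paper packages as the general inequality $\delta_k\leq\rho_k$ for all $k\in[0,T-1]$ (Lemma~\ref{p:sum sk sum rk}). The lemma you flag as the ``main obstacle'' is exactly this inequality, and it is less delicate than you fear: writing $H_k=\Phi_k J_k$ and $G_k=\Psi_k J_k$ with $J_k=\col\lr{\dx{0}{T-k-1},H_k(\duf)}$, rank--nullity gives $\rank H_k=\rank\Phi_k-\dim(\rs\Phi_k\cap\lk J_k)$ and likewise for $G_k$; since $\rs\Psi_k\subseteq\rs\Phi_k$ and $\rank\Phi_k-\rank\Psi_k=\rho_k$, subtraction immediately yields $\delta_k\leq\rho_k$. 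No control of the rank deficiency of $H_k(\duf)$ is needed, and the exact identity $\sum_{i=0}^k\delta_i=\rank\Omega_k$ that you first assert is false in general --- only the one-sided bound holds, and only it is ever used.

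In part~(b), however, your final chain has a genuine off-by-one gap. The inequality $\rank\Omega_q\leq n-(\ell-q)$ is false in general: take a single-output observable system with $n=\ell\geq 1$ explaining data with $q=0$ (for instance $T=1$, $u_0\neq 0$, so that $\delta_0=0$ and the hypothesis $\ell\geq q$ of part~(b) holds); then $\rank\Omega_0=1$ while $n-(\ell-q)=0$. Strict monotonicity of $k\mapsto\rank\Omega_k$ on $[0,\ell-1]$ only gives $\rank\Omega_{\ell-1}\geq\rank\Omega_q+(\ell-1-q)$, hence $\rank\Omega_q\leq n-(\ell-q)+1$, and combined with $\sum_{i=0}^q\delta_i\leq\rank\Omega_q$ this lands at $n-\sum_{i=0}^q\delta_i\geq\ell-q-1$, one short of the claim. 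The missing unit comes precisely from the fact that $\delta_q=0$ while $\rho_q\geq 1$ whenever $q\leq\ell-1$: you must drop one index and use $\sum_{i=0}^q\delta_i=\sum_{i=0}^{q-1}\delta_i\leq\rank\Omega_{q-1}$ together with $\rank\Omega_{q-1}\leq\rank\Omega_{\ell-1}-(\ell-q)\leq n-(\ell-q)$, treating $\ell=q$ separately via $\sum_{i=0}^q\delta_i\leq\rank\Omega_q\leq n$. This is exactly what the paper's bookkeeping $\sum_{i=0}^\ell\rho_i-\sum_{i=0}^q\delta_i\geq\sum_{i=q}^{\ell-1}\rho_i\geq\ell-q$ accomplishes. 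You noticed the increment-count ambiguity yourself but left it unresolved; the resolution is not a matter of recounting but of invoking $\delta_q=0$, which your ``cleaner bound'' $\sum_{i=0}^q\delta_i\leq\rank\Omega_q$ discards.
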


The proof of this theorem is given in Section~\ref{proof of t: n lower bound}.

\subsection{Constructing an explaining system}

Our second intermediate result concerns whether {\em one\/} explaining system can be computed from the data. To answer such question, we introduce the notion of {\em state for data}. 

Let $\sa\in\sy{n}$. We say that $\dxf\in\R^{n\times(T+1)}$ is {\em a state for\/} $\sa$ if \eqref{e:explains} is satisfied. We say that
$\dxf$ is {\em a state for the data\/} $(\duf,\dyf)$ if it is a state for some $\sa\in\sy{n}$. 

From \eqref{e:explains}, it follows that $\dxf$ is a state for the data $(\duf,\dyf)$ if and only if
\beq\label{e:state output rsp}
\rs\bbm\dxfp\\\dyf\ebm\subseteq\rs\bbm\dxfm\\\duf\ebm.
\eeq
Consequently if {\em one\/} state sequence satisfying \eqref{e:state output rsp} is available, {\em one\/} explaining system can be computed by solving \eqref{e:explains}. In the next result we show that for {\em any\/} data $(\duf,\dyf)$ there {\em always\/} exists an explaining system with state dimension $\sum_{i=0}^{q} \delta_i$, and that \emph{all} such systems have lag $q$ (see \eqref{e:def q}).

\bthe\label{t:state construct}
$\emptyset\neq\sy{\sum_{i=0}^{q} \delta_i}=\syl{q}{\sum_{i=0}^{q} \delta_i}$.
\ethe
The  proof of Theorem \ref{t:state construct} in Section~\ref{sec:stateconstr} relies on a new iterative state-computation scheme from  input-output data. 

\subsection{The shortest lag and the minimum number of states}
We define  the {\em shortest lag\/} $\lm$ and the {\em  minimum number of states\/} $\nm$ required to explain the data $(\duf,\dyf)$:
\begin{align}
\lm&:=\min\{\ell\geq0 \mid \exists\, \sln\neq \emptyset \text{ for some } n\geq0 \}\label{e:def lm}\\
\nm&:=\min\{n\geq 0 \mid \sy{n}\neq \emptyset \}\; .\label{e:def nm}
\end{align}
$\lm$ and $\nm$ can be computed from the $\delta_k$'s and $q$ as follows.
\bthe\label{t:nmin} Define $q$ by \eqref{e:def q}; then $\lm=q$ and $\nm=\sum_{i=0}^{\lm}\delta_i$. Moreover, $\sy{\nm}=\syl{\lm}{\nm}\subset\calO$.
\ethe

\begin{proof}
From Theorem~\ref{t:state construct}, we see that $q\geq \lm$ and $\sum_{i=0}^q \delta_i\geq \nm$; to prove the first part of the theorem, it is enough to prove the reverse inequalities. The definition of $q$ implies that $T\geq q+1$; consequently $T\geq \lm+1$. Since $\calE(\lm,n)\nonempty$ for some $n$ due to the definition of $\lm$ in \eqref{e:def lm}, Theorem~\ref{t: n lower bound}.\ref{t: n lower bound.1} yields $\lm\geq q$ and hence $\lm=q$. The definition of $\nm$ in \eqref{e:def nm} implies that $\calE(\ell,\nm)\nonempty$ for some $\ell$. Then, Theorem~\ref{t: n lower bound}.\ref{t: n lower bound.2} implies that $\nm- \sum_{i=0}^{\lm} \delta_i\geq\ell-\lm\geq 0$. This proves $\nm\geq  \sum_{i=0}^{\lm} \delta_i$ and hence $\nm=\sum_{i=0}^{\lm} \delta_i $.

To prove the second part, use Theorem~\ref{t:state construct} to conclude that $\sy{\nm}=\syl{\lm}{\nm}$. It remains to prove that $\sy{\nm}\subset\calO$. Suppose on the contrary that $\sy{\nm}$ contains an unobservable system; a Kalman decomposition  yields an explaining system with strictly lower state dimension. This contradicts the definition of $\nm$ in \eqref{e:def nm}. Consequently, all systems in $\sy{\nm}$ are observable: $\sy{\nm}\subset\calO$.\end{proof} 

\subsection{Sharpening the upper bound on the true lag}
Assume that $\sy{\ell,n}\neq \emptyset$; from Theorems~\ref{t: n lower bound}.\ref{t: n lower bound.2} and \ref{t:nmin} we obtain the  im\-me\-diate but crucial inequality
\beq\label{e: l bound with n lm nm}
\ell\leq n-\nm+\lm\; .
\eeq
Such inequality implies that the integer $\ld$ defined by 
\[
\ld:=N_+-\nm+\lm
\]
is an upper bound, purely determined by the data and $N_+$, for the lag of {\em every\/} explaining system with at most $N_+$ states. Such upper bound  yields a sharper  upper bound on the lag than  $L_+$; we can replace the latter by the {\em actual\/} upper bound
\[
\la:=\min(L_+,\ld)
\]  
since it follows from \eqref{e: l bound with n lm nm} that
\beq\label{e:trimming wrt la}
\calE_{[L_-,L_+],[N_-,N_+]}=\calE_{[L_-,\la],[N_-,N_+]}.
\eeq

\subsection{Data informativity for system identification}

The following  is the main result of this paper;  sufficiency  is proved in Section \ref{sec:t:mainsuff} and  necessity  in Section \ref{sec:t:mainnec}. 
\bthe\label{t:main}
The data $(\duf,\dyf)$ are informative for system identification within $\calS_{[L_-,L_+],[N_-,N_+]}\cap\calM$ if and only if the following conditions hold:
\bse\label{e:nec suff conditions}
\begin{gather}
\lm \geq L_-\label{e:lb l}\\
\nm\geq N_-\label{e:lb n}\\
T\geq \la+(\la+1)m+\nm\label{e:lb T}\\
\rank H_{\la}=(\la+1)m+\nm\; . \label{e:main rank condition}
\end{gather}
\ese
Moreover, if the conditions \eqref{e:nec suff conditions} are satisfied then 
\bse\label{e:extras}
\begin{gather}
\lt=\lm\label{e:extra-lm}\\
\nt=\nm\label{e:extra-nm}\\
\calE\cap\calS_{[L_-,L_+],[N_-,N_+]}\cap\calM=\sy{\nm}\; .\label{e:extra-main}
\end{gather}
\ese
\ethe 
Equations \eqref{e:nec suff conditions} are \emph{truly data-based} necessary and sufficient condition for informativity for system identification: $\lm$ and $\nm$ are computed directly from the data via Theorem~\ref{t:nmin}.

We conclude this section with some remarks on the consequences of Theorem~\ref{t:main} and its relation to other results.

\brem\rm 
It is not surprising that informativity for system identification involves a  rank condition on some  Hankel matrix of the data. Theorem~\ref{t:main} makes explicit that the depth of such Hankel matrix  depends {\em not only\/} on the prior knowledge of the system {\em but also\/} on the given data.\hfill \qed
\erem

\begin{remark}\label{rem:one upper bound}\rm 
Theorem~\ref{t:main} is applicable even if one of the upper bounds is unknown. Indeed, if only the upper bound $L_+$ on the lag is known, but an upper bound $N_{+}$ on the state dimension is not, one can fix $N_+=pL_+$ since $n\leq p\ell$ for any observable system in $\calS(\ell,n)$. For such choice it holds that
\[
N_+-\nm+\lm\geq pL_+-p\lm+\lm\geq L_+\; ;
\]
 the first inequality follows from the fact that $\nm\leq p\lm$ as $\calE(\lm,\nm)\subset\calO$ (see Theorem~\ref{t:nmin}) and the second one from $L_+\geq \lt\geq \lm.$ As such, if $ N_+=pL_+ $ then $\la=L_+$.

Conversely, if an upper bound $N_+$ on the state dimension is known but an upper bound $L_{+}$ on the lag is not, one can fix $L_+=N_+$ since the lag of a system cannot exceed its state dimension. For such choice of $L_+$ it holds that
\[
L_+\geq L_+-\nm+\lm=N_+-\nm+\lm
\]
where the inequality follows from the fact that $\nm\geq\lm$. Therefore, if $L_+=N_+$ then $ \la=\ld $.\hfill \qed
\end{remark}

\brem\rm 
If $L_-=L_+=\lt$ and $L_+=N_+=\nt$ then it is straightforward to see that Proposition~\ref{prop:ivan-florian} is a special case of Theorem~\ref{t:main} (see Section \ref{sec:2relevantresults}). It is less straightforward to prove that the rank condition \eqref{e:inp pe} in Proposition~\ref{prop:wetal} implies the conditions in Theorem~\ref{t:main}; we do this in Section~\ref{sec:prop to main thm}.  \hfill \qed
\erem

\brem\label{rem:data length}\rm 
Proposition~\ref{prop:wetal} can be applied only if the data length is {\em at least\/} $L_++N_++(L_++N_+)m-1$ whereas Theorem~\ref{t:main} can be applied if the data length is {\em at least\/} $\la+(\la+1)m+\nt$. As illustrated in Section~\ref{sec:running example}, the difference between these  lengths can be significantly large.\hfill \qed
\erem

\brem\rm
Theorem~\ref{t:main} is important for online experiment design. Assume that $\lt\geq 1$ and that only an upper bound $L_+$ is known; the procedure in \cite[Thm. 3]{Waarde21}  constructs a sequence $\duf$ with $T=(L_++1)m+L_++\nt$ such that the  data $(\duf,\dyf)$ generated by the true system are informative for system identification in $\calS_{[1,L_+],[1,pL_+]}\cap\calM$. Surprisingly, the procedure does {\em not\/} require  knowledge of $\nt$. For this case $\la=L_+$ (see Remark~\ref{rem:one upper bound}). If $(\duf,\dyf)$ are informative for system identification in $\calS_{[1,L_+],[1,pL_+]}\cap\calM$, then from \eqref{e:main rank condition} and \eqref{e:extra-nm} that $T\geq (L_++1)m+L_++\nt$. Thus the experiment design procedure in \cite[Thm. 3]{Waarde21} is {\em minimal\/} in the number of samples required. \hfill \qed
\erem

\brem\rm
The constructive proof of Theorem~\ref{t:state construct} and \eqref{e:extra-main} can be used to compute from  informative data an explaining system isomorphic to the true system. We show this in Example \ref{ex:state construct example} after the proof of Theorem~\ref{t:state construct}.\hfill \qed
\erem

\brem\rm 
The lower bounds $L_{-}$, $N_{-}$ are inconsequential for data informativity: if the data are informative within $\calS_{[L_-,L_+],[N_-,N_+]}\cap\calM$ then  necessarily $\lt=\lm$ and $ \nt=\nm$ (see \eqref{e:extra-lm}, \eqref{e:extra-nm}) and consequently the  data are also informative in $\calS_{[0,L_+],[0,N_+]}\cap\calM$. The converse holds since $\calS_{[L_-,L_+],[N_-,N_+]}\cap\calM\subseteq \calS_{[0,L_+],[0,N_+]}\cap\calM$. \hfill \qed 
\erem

Before proving Theorems \ref{t: n lower bound}, \ref{t:state construct} and \ref{t:main} we illustrate them with an example.

\section{Illustrative example}\label{sec:running example}
Consider a system \eqref{e:true sys} where $\nt=3$, $m=2$, $p=2$, and
\[
\struebig=
\left[
\begin{array}{ccc|cc}
0 & 1 & 0 & 1 & 0\\
0 & 0 & 1 & 0 & 1\\
0 & 0 & 0 & 0 & 1\\
\hline
1 & 0 & 0 & 1 & 0\\
0 & 1 & 0 & 0 & 0
\end{array}
\right].
\]
Note that $\lt=2$. Consider the input-output data
\[
\left[\begin{array}{c}
\du{0}{13}\\[1mm]
\hline
\dy{0}{13}
\end{array}\right]
=
\left[\begin{array}{cccccccccccccc}
1\! &\! 1\! &\! 1\! &\! 0\! &\! 0\! &\! 0\! &\! 0\! &\! 1\! &\! 0\! &\! 0\! &\! 0\! &\! 0\! &\! 1\! &\! 1\\
0\! &\! 0\! &\! 0\! &\! 1\! &\! 1\! &\! 1\! &\! 1\! &\! 1\! &\! 1\! &\! 1\! &\! 1\! &\! 1\! &\! 0\! &\! 0\\
\hline
2 \!&\!3 \!&\!2 \!&\!1 \!&\!0 \!&\!1 \!&\!2 \!&\!3 \!&\!3 \!&\!2 \!&\!2 \!&\!2 \!&\!3 \!&\!4\\
1 \!&\!0 \!&\!0 \!&\!0 \!&\!1 \!&\!2 \!&\!2 \!&\!2 \!&\!2 \!&\!2 \!&\!2 \!&\!2 \!&\!2 \!&\!1
\end{array}\right].
\]
One can verify that \eqref{e:data generated} is satisfied with the state data
\[
\dx{0}{14}=
\left[\begin{array}{ccccccccccccccc}
1 \!&\! 2 \!&\! 1 \!&\! 1 \!&\! 0 \!&\! 1 \!&\! 2 \!&\! 2 \!&\! 3 \!&\! 2 \!&\! 2 \!&\! 2 \!&\! 2 \!&\! 3 \!&\! 2\\
1 \!&\! 0 \!&\! 0 \!&\! 0 \!&\! 1 \!&\! 2 \!&\! 2 \!&\! 2 \!&\! 2 \!&\! 2 \!&\! 2 \!&\! 2 \!&\! 2 \!&\! 1 \!&\! 0\\
0 \!&\! 0 \!&\! 0 \!&\! 0 \!&\! 1 \!&\! 1 \!&\! 1 \!&\! 1 \!&\! 1 \!&\! 1 \!&\! 1 \!&\! 1 \!&\! 1 \!&\! 0 \!&\! 0
\end{array}\right].
\]
Table~\ref{tab:integers} presents the values of $\delta_k$ integers, $\lm$, and $\nm$ for different choices of $T$. The values of $\delta_k$ not explicitly indicated are zero: $\delta_k=0$ for every $T\in[4,14]$ and $k\in[3,T-1]$.
\renewcommand{\arraystretch}{1.2}
\begin{table}[!h]
\begin{center}
\begin{tabular}{c|cccc}
$ T $ & $ 1 $ & $ 2 $ & $ [3,5] $ & $ [6,14] $\\
\hline
$ \delta_{-1} $& $ 2 $ & $ 2 $ & $ 2 $ & $ 2 $\\
$ \delta_{0} $ & $ 0 $ & $ 1 $ & $ 2 $ & $ 2 $\\
$ \delta_{1} $ &       & $ 0 $ & $ 0 $ & $ 1 $\\
$ \delta_{2} $ &       &       & $ 0 $ & $ 0 $\\
\hline
$ \lm $ & 0 & 1 & 1 & 2\\
$ \nm $ & 0 & 1 & 2 & 3
\end{tabular}
\end{center}
\caption{$\delta_k$ integers, $\lm$, and $\nm$ for different choices of $T$}\label{tab:integers}
\end{table}
\vspace*{-3mm}
If $L_+=\lt$ and $N_+=\nt$, a necessary condition for informativity is that $T\geq (\lt+1)m+\lt+\nt$ (see \eqref{e:lb T}). It follows that if $T<11$ then $(\duf,\dyf)$ is not informative for every $L_+$ and $N_+$. The symbol `\checkmark' in Table~\ref{tab:data info} denotes data informativity for different values of $L_+$, $N_+$, and $T$, as  inferred from the fundamental lemma (Proposition~\ref{prop:wetal}) and Theorem~\ref{t:main}.  Proposition~\ref{prop:wetal} requires significantly more samples than Theorem~\ref{t:main} (see Remark~\ref{rem:data length}): the data $(u_{[0,13]},y_{[0,13]})$ are informative  for the case $L_+=N_+=4$ (see Theorem~\ref{t:main}). To infer informativity via the fundamental lemma, one would need at least $L_++N_++(L_++N_+)m-1=23$ samples.
\setlength{\tabcolsep}{0.4em}
\renewcommand{\arraystretch}{1.2}
\begin{table}[!h]
\begin{center}
\begin{tabular}{cc|cc|cccc|cccc|}
\multicolumn{4}{c}{} & \multicolumn{4}{|c|}{Proposition~\ref{prop:wetal}} & \multicolumn{4}{c|}{Theorem~\ref{t:main}}\\
\cline{5-12}
\multicolumn{4}{c}{} & \multicolumn{4}{|c|}{$ T $ values} & \multicolumn{4}{c|}{$ T $ values}\\
\cline{5-12}
$L_+$ & $N_+$ & $\ld$ & $\la$ & $11$ & $12$ & $13$ & $14$ & $11$ & $12$ & $13$ & $14$\\
\hline
$2$ & $3$ & $2$ & $2$ & & & & \checkmark&\checkmark & \checkmark & \checkmark & \checkmark\\
$2$ & $4$ & $3$ & $2$ & & & & &\checkmark & \checkmark & \checkmark & \checkmark\\
$2$ & $5$ & $4$ & $2$ & & & & &\checkmark & \checkmark & \checkmark & \checkmark\\
$2$ & $6$ & $5$ & $2$ & & & & &\checkmark & \checkmark & \checkmark & \checkmark\\
$3$ & $3$ & $2$ & $2$ & & & & &\checkmark & \checkmark & \checkmark & \checkmark\\
$3$ & $4$ & $3$ & $3$ & & & &&  &  &  & \checkmark\\
$3$ & $5$ & $4$ & $3$ & & & &&  &  &  & \checkmark\\
$3$ & $6$ & $5$ & $3$ & & & &&  &  &  & \checkmark\\
$4$ & $4$ & $3$ & $3$ & & & &&  &  &  & \checkmark
\end{tabular}
\end{center}
\caption{Informativity of the data for different bounds and $T$}\label{tab:data info}
\end{table}

\renewcommand{\arraystretch}{1}

\section{Lag structures of explaining systems}\label{sec:lag struct}
To prove Theorem~\ref{t: n lower bound}, we first define the lag structure and relate it to the integers $\delta_k$.

\subsection{The lag structure of a system}

Let $\sa\in \calS(\ell,n)$. For $k\geq -1$, define $\Omega_k$ by \eqref{e:def omega-k} and   
\[
\rho_k:=\begin{cases} p&\text{if } k=-1\\
\rank \Omega_k-\rank \Omega_{k-1}&\text{if } k\geq 0. \end{cases}
\]
We refer to the sequence $(\rho_k)_{k\in\N}$ as the {\em lag structure\/} of the system $\sa$. The $\rho_k$'s  are related to a \emph{specific} system; if necessary to resolve ambiguities, we use the notation $\rho_k(C,A)$. 
\begin{lemma}\label{p: sum rk}
Let $\sa\in \calS(\ell,n)$ and $(\rho_k)_{k\in\N}$ be its lag structure. The following statements hold:
\benabc
\item\label{p: sum rk.aa} $p\geq \rho_k\geq 0$ for all $k\geq -1$
\item\label{p: sum rk.bb} $\rho_{\ell-1}\geq 1$ and $\rho_k=0$ for all $k\geq\ell$,
\item\label{p: sum rk.ee} $n\geq \sum_{i=0}^{\ell} \rho_i$; if $(C,A)$ is observable then equality holds.
\item\label{p: sum rk.cc} $\rho_{k}\geq\rho_{k+1}$ for all $k\geq 0$.
\end{enumerate}
\end{lemma}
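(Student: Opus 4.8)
The plan is to prove the four statements about the lag structure $(\rho_k)_{k\in\N}$ of a system $\sa\in\calS(\ell,n)$ mostly by unwinding the definition of $\Omega_k$ in \eqref{e:def omega-k} and using elementary rank arguments about the chain $\Omega_{-1},\Omega_0,\Omega_1,\dots$, which is obtained by appending $p$ rows $CA^k$ at each step.

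For \ref{p: sum rk.aa}: since $\Omega_k$ is obtained from $\Omega_{k-1}$ by appending the $p\times n$ block $CA^k$, we have $\rank\Omega_{k-1}\leq\rank\Omega_k\leq\rank\Omega_{k-1}+p$, so $0\leq\rho_k\leq p$ for $k\geq 0$; the case $k=-1$ is the convention $\rho_{-1}=p$. For \ref{p: sum rk.bb}: by definition $\ell=\ell(C,A)$ is the smallest $k\geq 0$ with $\rank\Omega_k=\rank\Omega_{k-1}$, i.e.\ $\rho_\ell=0$; and for every $k<\ell$ we have $\rho_k\geq 1$, in particular $\rho_{\ell-1}\geq 1$ (when $\ell\geq 1$; if $\ell=0$ then $\rho_{-1}=p\geq 1$ trivially gives this, and here $n=0$). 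The fact that $\rho_k=0$ for \emph{all} $k\geq\ell$, not just $k=\ell$, is the standard Cayley--Hamilton-type stabilization argument: once $\rs\Omega_k=\rs\Omega_{k-1}$, every row of $CA^k$ lies in $\rs\Omega_{k-1}$, hence $CA^k=M\Omega_{k-1}$ for some matrix $M$; multiplying by $A$ shows $CA^{k+1}=M\Omega_{k-1}A$, and one checks $\rs(\Omega_{k-1}A)\subseteq\rs\Omega_k=\rs\Omega_{k-1}$, so $\rs\Omega_{k+1}=\rs\Omega_k$ and $\rho_{k+1}=0$; induction finishes it. For \ref{p: sum rk.ee}: telescoping, $\rank\Omega_\ell=\sum_{i=0}^{\ell}\rho_i$ (using $\rank\Omega_{-1}=0$), and $\rank\Omega_\ell\leq n$ always since $\Omega_\ell$ has $n$ columns; if $(C,A)$ is observable then $\rank\Omega_{n-1}=n$ and since the rank has stabilized by index $\ell$ we get $\rank\Omega_\ell=n$, so equality.

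The step I expect to be the main obstacle is \ref{p: sum rk.cc}, the monotonicity $\rho_k\geq\rho_{k+1}$ for $k\geq 0$, since this is not a pure dimension-counting fact but uses the shift structure of the observability matrix. The idea is: $\rho_k=\dim\bigl(\rs\Omega_k/\rs\Omega_{k-1}\bigr)$ can be read as the dimension of the space of ``new'' row directions added by $CA^k$ modulo $\rs\Omega_{k-1}$. Multiplication on the right by $A$ maps $\rs\Omega_k$ into $\rs\Omega_{k+1}$ and maps $\rs\Omega_{k-1}$ into $\rs\Omega_k$, hence induces a well-defined linear map $\rs\Omega_k/\rs\Omega_{k-1}\to\rs\Omega_{k+1}/\rs\Omega_k$; the key observation is that this induced map is \emph{surjective}, because $\rs\Omega_{k+1}$ is spanned by $\rs\Omega_k$ together with the rows of $CA^{k+1}=(CA^k)A$, i.e.\ together with $(\rs\text{ of the last }p\text{ rows of }\Omega_k)A$, and those last-$p$-rows span $\rs\Omega_k$ modulo $\rs\Omega_{k-1}$. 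Surjectivity of a linear map between finite-dimensional spaces forces $\dim(\rs\Omega_{k+1}/\rs\Omega_k)\leq\dim(\rs\Omega_k/\rs\Omega_{k-1})$, i.e.\ $\rho_{k+1}\leq\rho_k$. I would state this cleanly in terms of the subspace $\calX_k:=\rs\Omega_k\subseteq\R^{1\times n}$, with $\calX_{-1}=\bz_n$, noting $\calX_{k+1}=\calX_k+\calX_k A$ (this identity itself deserves a one-line check: $\calX_{k+1}=\rs\col(\Omega_k,CA^{k+1})=\calX_k+\rs(CA^{k+1})$ and $\rs(CA^{k+1})=\rs(CA^k)A\subseteq\calX_k A\subseteq\calX_{k+1}$, while conversely $\calX_k A=\rs(\Omega_k A)\subseteq\calX_{k+1}$ since each row of $\Omega_i A$ is $CA^{i+1}$ for $i\le k$), and then the right-multiplication-by-$A$ argument gives the result. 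I would present \ref{p: sum rk.cc} last, after \ref{p: sum rk.aa}--\ref{p: sum rk.ee}, since it is the only part requiring a genuinely new idea beyond bookkeeping.
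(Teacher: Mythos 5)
Your proof is correct, and parts (a)--(c) are in substance the paper's own argument: the paper dismisses (a) and (b) as immediate from the definitions and proves (c) by the same telescoping identity $\rank\Omega_\ell=\sum_{i=0}^{\ell}\rho_i$ together with $\rank\Omega_\ell\leq n$; your explicit Cayley--Hamilton-style stabilization argument showing $\rho_k=0$ for \emph{all} $k\geq\ell$ is exactly what the paper's ``readily follows'' leaves implicit. (One immaterial slip: $\ell(C,A)=0$ means $C=0$, not $n=0$; but that aside is never used.) For the monotonicity (d) you take a genuinely different route. The paper first derives the characterization $\rho_k=\rank CA^k-\dim V_k$ with $V_k=\rs\Omega_{k-1}\cap\rs CA^k$, applies rank--nullity to $CA^{k+1}=CA^kA$ to get $\rho_k-\rho_{k+1}=\dim W_k+\dim V_{k+1}-\dim V_k$ with $W_k=\rs CA^k\cap\lk A$, and then exhibits an injection of a complement $Z_k$ of $V_k\cap W_k$ inside $V_k$ into $V_{k+1}$ via right multiplication by $A$. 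You instead establish $\rs\Omega_{k+1}=\rs\Omega_k+(\rs\Omega_k)A$ and observe that right multiplication by $A$ induces a well-defined \emph{surjective} map $\rs\Omega_k/\rs\Omega_{k-1}\to\rs\Omega_{k+1}/\rs\Omega_k$, whence $\rho_{k+1}\leq\rho_k$. Both arguments exploit the same shift structure, but yours is shorter, avoids the auxiliary subspaces $V_k$, $W_k$, $Z_k$ and the explicit basis of $Z_k$, and makes the underlying mechanism (surjectivity of the induced shift on successive quotients) transparent; the paper's version has the incidental benefit of producing the formula $\rho_k=\rank CA^k-\dim V_k$, which is however not reused elsewhere. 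No gaps.
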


\begin{proof}
The statements \ref{p: sum rk.aa} and \ref{p: sum rk.bb} readily follow from the definitions of $\rho_k$ and the lag. To prove \ref{p: sum rk.ee}, note that $\rank\Omega_\ell=\sum_{i=0}^{\ell} \rho_i$. This proves \ref{p: sum rk.ee} since $n\geq\rank\Omega_\ell$, and equality holds if $(C,A)$ is observable.

To prove \ref{p: sum rk.cc}, we first give an alternative characterization of the integers $\rho_k$. Let $k\geq 0$. Note that $\rs\Omega_{k}=\rs\Omega_{k-1}+\rs CA^k$ since $\Omega_{k}=\bbm \Omega_{k-1}\\CA^k\ebm$. As such, we have $\rank\Omega_{k}=\rank\Omega_{k-1}+\rank CA^k-\dim V_k$ where $V_k:=\rs\Omega_{k-1}\cap\rs CA^k$. This leads to the following alternative characterization for $\rho_k$:
\beq\label{e:alt delta}
\rho_k=\rank CA^k-\dim V_k\; .
\eeq
Now let $k\geq 0$ and observe that $CA^{k+1}=CA^{k}A$. Apply the rank-nullity theorem and obtain
\beq\label{e:cak to cak-1}
\rank CA^{k}=\rank CA^{k+1}+\dim W_{k}\; ,
\eeq
where $W_{k}:=\rs CA^{k}\cap \lk A$. By combining \eqref{e:alt delta} and \eqref{e:cak to cak-1}, we obtain $\rho_{k}-\rho_{k+1}=\dim W_{k}+\dim V_{k+1}-\dim V_{k}$. Let $Z_{k}$ be a subspace such that $V_{k}=(V_{k}\cap W_{k})\oplus Z_{k}$. Then, we have 
\begin{align}
\rho_{k}-\rho_{k+1}\geq \dim V_{k+1}-\dim Z_{k}.\label{e:rk ito v and w and z}
\end{align}
Let $d=\dim Z_{k}$ and $\eta_i\in\R^{1\times n}$ with $i\in[1,d]$ be a basis for $Z_{k}$. From the definition of $Z_{k}$, it readily follows that $\eta_iA$ are linearly independent and $\eta_i A\in \rs \Omega_{k-1}A\cap\rs CA^{k+1}\subseteq \rs \Omega_{k}\cap\rs CA^{k+1}=V_{k+1}$. Therefore, $\dim Z_{k}\leq \dim V_{k+1}$. Hence, it follows from \eqref{e:rk ito v and w and z} that $\rho_{k}\geq\rho_{k+1}$ for all $k\geq 0$.\end{proof}

\subsection{Lag structures and $\delta_k$ integers}
Let $\sa\in \calS(n)$. For $k\geq -1$, define the \emph{$k$-th controllability matrix}, and the \emph{$k$-th system matrix}, respectively, by 
\begin{align}
\Gamma_k&:=\begin{cases}
0_{n,0}&\mbox{ if } k=-1\\
\bbm A^{k}B&\Gamma_{k-1}\ebm&\mbox{ if } k\geq 0 
\end{cases} \\
\Theta_k&:=\begin{cases}
0_{0,0}&\mbox{ if } k=-1\\
\bbm \Theta_{k-1} & 0\\C\Gamma_{k-1}&D\ebm&\mbox{ if } k\geq 0\; . 
\end{cases}
\end{align}
$\Theta_{k}$ is the Toeplitz matrix of the first $k+1$ Markov parameters of \eqref{eq: generic linear system}. Given a state sequence $\dxf$ for an explaining system $\sa\in\calE(n)$, the data Hankel matrices $H_k$ and $G_k$ are related to the observability and system matrices by:
\bse
\label{e:hankel data from system and state}
\begin{align}
H_k&=\bbm
H_k(\duf)\\
H_k(\dyf)
\ebm=\Phi_k
\bbm
\dx{0}{T-k-1}\\
H_k(\duf)
\ebm\label{e:hk phik jk}\\
G_k&=\bbm H_k(\duf)\\
H_{k-1}(\dy{0}{T-2})\ebm=
\Psi_k
\bbm
\dx{0}{T-k-1}\\
H_k(\duf)
\ebm\label{e:gk psik jk}
\end{align}
\ese
where
\beq\label{e:Phi and Psi}
\Phi_k\!:=\bbm
0&I\\
\Omega_k&\Theta_k 
\ebm\text{ and }
\Psi_k\!:=
\bbm
0&I&0\\
0&0&I\\
\Omega_{k-1}&\Theta_{k-1} &0_{kp,m}
\ebm\!\!\!.
\eeq
We now relate the integers $\delta_k$ (defined in terms of the \emph{data matrices} $\hk{k}$ and $\hhk{k}$ only) and the integers $\rho_k$ (defined by the matrices $C$ and $A$ of a \emph{specific explaining system}). 
\blem\label{p:sum sk sum rk}
Let $(\rho_k)_{k\in\N}$ be the lag structure of an explaining system. For every $k\in[0,T-1]$, $\rho_k\geq\delta_k$. 
\elem

\begin{proof}
Let $k\in[0,T-1]$. Define $J_k:=\begin{bsmallmatrix}
\dx{0}{T-k-1}\\
H_k(\duf)
\end{bsmallmatrix}$;  by applying the rank-nullity theorem to 
\eqref{e:hankel data from system and state}, we see that
\begin{align}
\rank H_k&=\rank \Phi_k-\dim(\rs \Phi_k\cap \lk J_k)\label{e:rank hk-char}\\
\rank G_k&=\rank \Psi_k-\dim(\rs \Psi_k\cap \lk J_k)\label{e:rank gk-char}\; .
\end{align}
Now $\rank\Phi_k-\rank\Psi_k=\rank\Omega_k-\rank \Omega_{k-1}$ and $\rs\Psi_k\subseteq\rs\Phi_k$ from  \eqref{e:Phi and Psi}; subtract \eqref{e:rank gk-char} from \eqref{e:rank hk-char} to obtain $\rho_k\geq\delta_k$.
\end{proof}

We are ready to prove Theorem~\ref{t: n lower bound}.

\subsection{Proof of Theorem~\ref{t: n lower bound}}\label{proof of t: n lower bound}
Let $\sa\in\sln$ and let $(\rho_k)_{k\in\N}$ be the lag structure of the system $\sa$. To prove \ref{t: n lower bound.1}, note that $\rho_\ell=0$ due to Lemma~\ref{p: sum rk}.\ref{p: sum rk.bb}. Since $T-1\geq \ell$ by hypothesis, Lemma~\ref{p:sum sk sum rk} and \eqref{e: rhok basic} imply that $\delta_\ell=0$. Then $\ell\geq q$ from  \eqref{e:def q}. 

To prove \ref{t: n lower bound.2}, note that
\beq\label{e:n- something}
n-\sum_{i=0}^{q} \delta_i\geq \sum_{i=0}^{\ell} \rho_i-\sum_{i=0}^{q} \delta_i
\eeq
due to Lemma~\ref{p: sum rk}.\ref{p: sum rk.ee}. Therefore, if $\ell=q$ then \ref{t: n lower bound.2} readily follows from Lemma~\ref{p:sum sk sum rk}. Suppose that $\ell>q$. Note that
\[
\sum_{i=0}^{\ell} \rho_i-\sum_{i=0}^{q} \delta_i\geq \sum_{i=q}^{\ell-1}\rho_i
\geq \ell-q
\]
where the first inequality follows from Lemma~\ref{p: sum rk}.\ref{p: sum rk.bb} and Lemma~\ref{p:sum sk sum rk}, and the second from the statements \ref{p: sum rk.bb} and \ref{p: sum rk.cc} of Lemma~\ref{p: sum rk}. Consequently, \ref{t: n lower bound.2} follows from \eqref{e:n- something}.\EP

\section{State construction}\label{sec:construct state}
In this section, we present a new iterative state construction procedure from  data instrumental in proving Theorem~\ref{t:state construct}. 

\subsection{On the left kernels of data Hankel matrices}
The  $\delta_k$'s defined by \eqref{e: rhok} are strongly related to certain subspaces of the left kernels of data Hankel matrices. To show this, we first introduce a  {\em shift\/} operator on subspaces. Let $\calV\subseteq\R^{1\times \kappa(m+p)}$ be a subspace where $\kappa\in\N$. Define $\sigma\calV$ as the subspace of all vectors of the form
$
\begin{bmatrix} 0_{1\times m}&v_1&0_{1\times p}&v_2\end{bmatrix}
$
where $v_1\in\R^{1\times\kappa m}$ and $v_2\in\R^{1\times\kappa p}$ satisfy
$
\begin{bmatrix} v_1&v_2\end{bmatrix}\in \mathcal{V}.
$
By convention, $\sigma^0\calV:=\calV$. Moreover, $\sigma^k\calV=\sigma (\sigma^{k-1}\calV)$ for $k\geq 1$. The definitions of $H_k$ and $G_k$ imply that 
\beq\label{e:lk G in lk H}
\lk G_k\times{\bz}_p\subseteq\lk H_k
\eeq
for all $k\in[0,T-1]$. From the definition of $\sigma$ and the Hankel structure it follows that if $T\geq 2$, then for all $k\in[0,T-2]$
\begin{align}
\sigma\lk\hk{k}&\subseteq\lk\hk{k+1}\label{e:hk in hk+1.1}\\
\sigma (\lk \hhk{k}\times{\bz}_p)&\subseteq \lk \hhk{k+1}\times{\bz}_p\label{e:hk in hk+1.2}\\
\!\!\!\!\sigma\lk H_k\cap (\lk G_{k+1}\times {\bz}_p)&=\sigma(\lk G_{k}\times {\bz}_p).\label{e:hk in hk+1.3}
\end{align}
The following result shows that $\lk \hk{k}$ can be written into a direct sum of $\lk \hhk{k} \times \{0\}^p$ and shifts of certain subspaces. 
\begin{lemma}\label{l:si subspaces exist}
For $k\in[0,T-1]$, there exist subspaces $\calS_{k}\subseteq\R^{1\times (k+1)(m+p)}$ satisfying 
\beq\label{e:def Si}
\lk \hk{k}=
\lr{\bigoplus_{i=0}^k\sigma^{k-i}\calS_i}
\oplus
\lr{\lk\hhk{k}\times{\bz}_p}
\eeq
and $\dim \calS_k =\delta_{k-1}-\delta_k$.
\end{lemma}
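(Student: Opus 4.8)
The plan is to prove the lemma by induction on $k$, at each step choosing $\calS_k$ to be an algebraic complement of a suitable subspace and then reading off $\dim\calS_k$ from a rank--nullity count. Before starting I would record two elementary properties of the shift $\sigma$: it is injective as a linear map, so $\dim\sigma\calV=\dim\calV$ for every subspace $\calV$; and it is additive, $\sigma(\calV_1+\calV_2)=\sigma\calV_1+\sigma\calV_2$. Together these show that $\sigma$ sends direct sums to direct sums, which is exactly what is needed to transport the inductive hypothesis through a shift without losing either the direct-sum structure or the dimensions.

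For the base case $k=0$, by \eqref{e:lk G in lk H} we have $\lk G_0\times{\bz}_p\subseteq\lk H_0$, so I may take $\calS_0$ to be any complement of $\lk G_0\times{\bz}_p$ in $\lk H_0$; then \eqref{e:def Si} holds for $k=0$ by construction, and rank--nullity together with the definition \eqref{e: rhok} gives $\dim\calS_0=(m+p-\rank H_0)-(m-\rank G_0)=p-\delta_0=\delta_{-1}-\delta_0$.

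For the inductive step, assume \eqref{e:def Si} and $\dim\calS_i=\delta_{i-1}-\delta_i$ for all $i\leq k$, with $k+1\leq T-1$ (if $T=1$ only the base case arises). Applying $\sigma$ to \eqref{e:def Si} and using the two properties above gives $\sigma\lk H_k=\bigl(\bigoplus_{i=0}^{k}\sigma^{k+1-i}\calS_i\bigr)\oplus\sigma(\lk G_k\times{\bz}_p)$. Put $\calT:=\sigma\lk H_k+(\lk G_{k+1}\times{\bz}_p)$; since $\sigma(\lk G_k\times{\bz}_p)\subseteq\lk G_{k+1}\times{\bz}_p$ by \eqref{e:hk in hk+1.2}, that summand is absorbed and $\calT=\bigl(\bigoplus_{i=0}^{k}\sigma^{k+1-i}\calS_i\bigr)+(\lk G_{k+1}\times{\bz}_p)$. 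The main obstacle is to show this sum is direct, and this is precisely where the intersection identity \eqref{e:hk in hk+1.3} does the work: the intersection of $\bigoplus_{i=0}^{k}\sigma^{k+1-i}\calS_i$ (which lies inside $\sigma\lk H_k$) with $\lk G_{k+1}\times{\bz}_p$ is contained in $\sigma\lk H_k\cap(\lk G_{k+1}\times{\bz}_p)=\sigma(\lk G_k\times{\bz}_p)$, and the latter meets $\bigoplus_{i=0}^{k}\sigma^{k+1-i}\calS_i$ only in $\{0\}$ because the just-displayed $\sigma$-image decomposition of $\sigma\lk H_k$ is direct. Hence $\calT=\bigl(\bigoplus_{i=0}^{k}\sigma^{k+1-i}\calS_i\bigr)\oplus(\lk G_{k+1}\times{\bz}_p)$, and $\calT\subseteq\lk H_{k+1}$ by \eqref{e:hk in hk+1.1} and \eqref{e:lk G in lk H}. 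I then take $\calS_{k+1}$ to be any complement of $\calT$ in $\lk H_{k+1}$; since $\sigma^{0}\calS_{k+1}=\calS_{k+1}$, this yields exactly \eqref{e:def Si} at level $k+1$.

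It remains to compute $\dim\calS_{k+1}$. By construction it equals $\dim\lk H_{k+1}-\sum_{i=0}^{k}\dim\calS_i-\dim(\lk G_{k+1}\times{\bz}_p)$, and by the inductive hypothesis the middle sum telescopes to $\delta_{-1}-\delta_k=p-\delta_k$. Substituting $\dim\lk H_{k+1}=(k+2)(m+p)-\rank H_{k+1}$ and $\dim(\lk G_{k+1}\times{\bz}_p)=\dim\lk G_{k+1}=(k+2)m+(k+1)p-\rank G_{k+1}$, the terms in $m$ and $p$ cancel and, using $\delta_{k+1}=\rank H_{k+1}-\rank G_{k+1}$, one is left with $\dim\calS_{k+1}=\delta_k-\delta_{k+1}$, closing the induction. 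As a byproduct this re-derives the monotonicity $\delta_k\geq\delta_{k+1}$, since a complement has nonnegative dimension. Apart from the directness step, everything is routine bookkeeping with rank--nullity and the Hankel block structure.
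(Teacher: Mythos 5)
Your proof is correct and follows essentially the same route as the paper's: induction on $k$, choosing $\calS_{k+1}$ as a complement of $\sigma\lk H_k+(\lk G_{k+1}\times\bz_p)$ inside $\lk H_{k+1}$, using \eqref{e:hk in hk+1.3} as the key intersection identity to get directness, and the same rank--nullity bookkeeping for $\dim\calS_k=\delta_{k-1}-\delta_k$. The only difference is cosmetic --- you establish the direct-sum structure of the sum $\calT$ before picking the complement, whereas the paper picks the complement first and then re-splits the sum via a three-subspace lemma --- so no further comment is needed.
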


\begin{proof} To prove the existence of subspaces satisfying \eqref{e:def Si} we use induction on $k$. From \eqref{e:lk G in lk H} with $k=0$, we see that there exists a subspace $\calS_0\subseteq\R^{1\times (m+p)}$ such that $\lk\hk{0}=\calS_0\oplus\lr{\lk\hhk{0}\times{\bz}_p}$. If $T=1$, there is nothing more to prove. Suppose that $T\geq 2$. Let $k\in[0,T-2]$ and assume that there exist subspaces $\calS_{i}\subseteq\R^{1\times (i+1)(m+p)}$ with $i\in[0,k]$ satisfying
\beq\label{e:def Sl}
\lk \hk{k}=
\lr{\bigoplus_{i=0}^k\sigma^{k-i}\calS_i}
\oplus
\lr{\lk\hhk{k}\times{\bz}_p}.
\eeq
From \eqref{e:lk G in lk H}, \eqref{e:hk in hk+1.1} conclude that $\sigma\lk\hk{k}+(\lk \hhk{k+1}\times{\bz}_p)\subseteq \lk H_{k+1}$:  there exists $\calS_{k+1}\subseteq\R^{1\times (k+2)(m+p)}$ such that 
\beq\label{e:Sk+1 debute}
\lk\hk{k+1}\!=\!\calS_{k+1}\oplus \lr{\sigma\lk\hk{k}+(\lk \hhk{k+1}\times{\bz}_p)}.
\eeq
Given subspaces $\calV_1,\calV_2,\calV_3$ with $\calV_1\cap\calV_2=\zset$ and $(\calV_1+\calV_2)\cap\calV_3=\calV_2$, it holds that $\calV_1+\calV_2+\calV_3=\calV_1\oplus\calV_3$. Take $\calV_1=\sigma(\bigoplus_{i=0}^k\sigma^{k-i}\calS_i)$, $\calV_2=\sigma(\lk\hhk{k}\times{\bz}_p)$, and $\calV_3=\lk\hhk{k+1}\times{\bz}_p$. Note that $\calV_1\cap\calV_2=\zset$ due to \eqref{e:def Sl} and $(\calV_1+\calV_2)\cap\calV_3=\calV_2$ due to \eqref{e:hk in hk+1.3}. Therefore, we see from \eqref{e:def Sl} and \eqref{e:Sk+1 debute} that
$
\lk \hk{k+1}=
\lr{\bigoplus_{i=0}^{k+1}\sigma^{k+1-i}\calS_i}
\oplus
\lr{\lk\hhk{k+1}\times{\bz}_p}.
$
This proves that there exist subspaces $\calS_k$ such that \eqref{e:def Si} holds. To complete the proof, it remains to show that $\dim\calS_k=\delta_{k-1}-\delta_k$ for $k\in[0,T-1]$. Let $k\in[0,T-1]$ and observe that \eqref{e:def Si} yields $
\dim\lk H_k=\sum_{i=0}^k\dim\calS_i+\dim\lk G_k.$ From the rank-nullity theorem, we conclude that $
\sum_{i=0}^k\dim\calS_i=\delta_{-1}-\delta_{k}.$ Therefore, $\dim \calS_0=\delta_{-1}-\delta_{0}$ and $
\dim\calS_k=\sum_{i=0}^k\dim\calS_i-\sum_{i=0}^{k-1}\dim\calS_i=
\delta_{k-1}-\delta_{k}$ for all $k\in[1,T-1]$.
\end{proof}

\subsection{Proof of Theorem~\ref{t:state construct}}\label{sec:stateconstr}
We first prove that $\sy{\sum_{i=0}^{q} \delta_i}\nonempty$ by constructing a state $x\in\R^{\sum_{i=0}^{q} \delta_i\times (T+1)}$ for the data $(\duf,\dyf)$. 

Let subspaces $\calS_k\subseteq \R^{1\times (k+1)(m+p)}$ with $k\in[0,T-1]$ be as in Lemma~\ref{l:si subspaces exist}. Also, denote $s_k:=\dim\calS_k$. For $i\in[0,q]$, let  $Q_{i,j}\in\R^{s_i\times m}$ and $P_{i,j}\in\R^{s_i\times p}$ with $j\in[0,i]$ be such that the rows of the matrix
$
R_i:=\begin{bmatrix}
Q_{i,0} & Q_{i,1} & \cdots&Q_{i,i} &P_{i,0} & P_{i,1} &\cdots & P_{i,i}
\end{bmatrix}
$
form a basis for $\calS_i$. Note that $R_i\in\R^{s_i\times (i+1)(m+p)}$. 
    
Since $\calS_i\subseteq\lk H_i$ due to \eqref{e:def Si},  $R_i H_i=0$ and hence 
\beq\label{e:def q and p implies}
\sum_{j=0}^{i}Q_{i,j}\du{j}{T-1-i+j}+P_{i,j}\dy{j}{T-1-i+j}=0 .
\eeq
Due to Lemma~\ref{l:si subspaces exist}, 
$\sum_{i=0}^{q} s_i=\delta_{-1}-\delta_{q}=p$ since $\delta_{-1}=p$ and $\delta_q=0$ by definition. 
Therefore, the matrix  
$
\Pi:=\col( 
P_{0,0},
P_{1,1},\ldots,P_{q,q})$
is $p\times p$.

We claim that $\Pi$ is nonsingular. To see this, let $\eta\in\R^{1\times p}$ be such that $\eta \Pi=0$. Define
$$
R:=\begin{bsmallmatrix}
0 &0 &\cdots& 0 & Q_{0,0}&0&0&\cdots &0& P_{0,0}\\
 0 & 0 &\cdots& Q_{1,0} &Q_{1,1}&0&0&\cdots&   P_{1,0}& P_{1,1}\\
 \vdots & \vdots &  & \vdots & \vdots & \vdots  & & \vdots& \vdots\\
 Q_{q,0} & Q_{q,1}&\cdots & Q_{q,q-1} & Q_{q,q}  & P_{q,0} & P_{q,1} &\cdots&P_{q,q-1}& P_{q,q}
\end{bsmallmatrix}
$$
and observe that $\Pi$ is the last block-column of $R$. 

From the definition of $Q_{i,j}$ and $P_{i,j}$, it is straightforward to verify that the rows of $R$ form a basis for the subspace $\bigoplus_{i=0}^{q}\sigma^{(q-i)}\calS_i$. Then, $\eta R\in\bigoplus_{i=0}^{q}\sigma^{(q-i)}\calS_i$. This means that $RH_q=0$. Since $\eta \Pi=0$, the last $p$ entries of $\eta R$ are zero. Therefore, $\eta R\in \lr{\lk\hhk{q}\times{\bz}_p}$ and hence 
$\eta R\in\lr{\bigoplus_{i=0}^{q}\sigma^{(q-i)}\calS_i}
\cap
\lr{\lk\hhk{q}\times{\bz}_p}$. Since $\lr{\bigoplus_{i=0}^{q}\sigma^{(q-i)}\calS_i}
\cap
\lr{\lk\hhk{q}\times{\bz}_p}=\{0\}$ due to \eqref{e:def Si}, we conclude that $\eta R=0$. Since the rows of $R$ are linearly independent, we see that $\eta=0$ and thus
$\Pi\in\Rpp$ is nonsingular. 

We now distinguish two cases: $q=0$ and $q\geq1$.

For the case $q=0$, we have $s_0=\delta_{-1}-\delta_0=p$ and $\Pi=P_{0,0}$. It follows from \eqref{e:def q and p implies} with $i=0$ and the nonsingularity of $P_{0,0}$ that $\dyf=-P_{0,0}\inv Q_{0,0}\duf$. Consequently, the memoryless model associated with $-P_{0,0}\inv Q_{0,0}$ explains the data: $-P_{0,0}\inv Q_{0,0}\in\sy{0}\nonempty$. Together with $\sum_{i=0}^{q} \delta_i=0$, this proves the claim for the case $q=0$.

For the case $q\geq 1$, we first construct some auxiliary sequences from the data and then we show that such sequences can be used to compute a state for the data $(\duf,\dyf)$. 

Let  $i\in[1,q]$ and for $k\in[1,i]$ define $x_0^{i,k}\in\R^{s_i}$ by
\beq\label{e:i k 0}
x_0^{i,k}:=\sum_{j=k}^{i}(Q_{i,j}u_{j-k}+P_{i,j}y_{j-k}).
\eeq
Define $\dxfpp^{i,k}\in\R^{s_i\times T}$ by 
\beq\label{e:i 1 1-T}
\dxfpp^{i,1}:=-Q_{i,0}\du{0}{T-1}-P_{i,0}\dy{0}{T-1}
\eeq
for $k=1$ and by 
\beq\label{e:i k 1-T}
\dxfpp^{i,k}:=\dxfm^{i,k-1}-Q_{i,k-1}\du{0}{T-1}-P_{i,k-1}\dy{0}{T-1} 
\eeq
for $k\in[2,i]$. Finally, define  
$
\dx{0}{T}^i:=\col(\dx{0}{T}^{i,1},\dx{0}{T}^{i,2},\ldots,\dx{0}{T}^{i,i}) \in\R^{is_i\times(T+1)}
$
and
$
\dx{0}{T}:=\col(\dx{0}{T}^{1},\dx{0}{T}^{2},\ldots,\dx{0}{T}^{q})\in \R^{\left(\sum_{i=1}^{q}is_i \right)\times(T+1)} .
$

We claim that
\beq\label{e:i i 0-T}
\dx{0}{T-1}^{i,i}=Q_{i,i}\du{0}{T-1}+P_{i,i}\dy{0}{T-1}.
\eeq
To verify this, note first that due to \eqref{e:i k 0}
\beq\label{e:i i 0}
x^{i,i}_0=Q_{i,i}u_0+P_{i,i}y_0\; .
\eeq
Consider the case $i=1$; then 
$
\dx{1}{T-1}^{1,1}\overset{\eqref{e:i 1 1-T}}{=}-Q_{1,0}\du{0}{T-2}-P_{1,0}\dy{0}{T-2}.
$
Now 
$
\dx{1}{T-1}^{1,1}\overset{\eqref{e:def q and p implies}}{=}Q_{1,1}\du{1}{T-1}+P_{1,1}\dy{1}{T-1}
$; using  \eqref{e:i i 0}, we conclude that \eqref{e:i i 0-T} holds for $i=1$. 

To prove \eqref{e:i i 0-T} for the case $i>1$, let $\alpha\in[1,T-1]$. We first consider the case $\alpha\in[1,i-1]$. Note that  
\begin{align}
&x^{i,i}_{\alpha}-x^{i,i-\alpha}_{0}=\sum_{j=i-\alpha+1}^i(x^{i,j}_{\alpha-i+j}-x^{i,j-1}_{\alpha-i+j-1})\notag\\
&\overset{\eqref{e:i k 1-T}}{=}-\sum_{j=i-\alpha+1}^i
(Q_{i,j-1}u_{\alpha-i+j-1}+P_{i,j-1}y_{\alpha-i+j-1}).\label{e:x alpha i i - x zero i i - alpha}
\end{align}
Now 
$
x^{i,i-\alpha}_0\overset{\eqref{e:i k 0}}{=}\sum_{j=i-\alpha}^i Q_{i,j}u_{j-(i-\alpha)}+P_{i,j}y_{j-(i-\alpha)}$;  from \eqref{e:x alpha i i - x zero i i - alpha} it follows that 
\begin{align*}
x^{i,i}_{\alpha}&{=}\sum_{j=i-\alpha}^{i}(Q_{i,j}u_{\alpha-i+j}+P_{i,j}y_{\alpha-i+j})\\&\quad-\sum_{j=i-\alpha+1}^i
(Q_{i,j-1}u_{\alpha-i+j-1}+P_{i,j-1}y_{\alpha-i+j-1})\\
&=Q_{i,i}u_{\alpha}+P_{i,i}y_{\alpha} .
\end{align*}
Together with \eqref{e:i i 0}, this proves that 
\beq\label{e:i i 0-i-1}
\dx{0}{i-1}^{i,i}=Q_{i,i}\du{0}{i-1}+P_{i,i}\dy{0}{i-1}.
\eeq
Now let $\alpha\in[i,T-1]$, and note that
$
x^{i,i}_{\alpha}-x^{i,1}_{\alpha-i+1}=\sum_{k=2}^i(x^{i,k}_{\alpha-i+k}-x^{i,k-1}_{\alpha-i+k-1})
\overset{\eqref{e:i k 1-T}}{=}-\sum_{k=2}^i
(Q_{i,k-1}u_{\alpha-i+k-1}+P_{i,k-1}y_{\alpha-i+k-1}).
$
Use \eqref{e:i 1 1-T} to obtain 
$
x^{i,i}_{\alpha}{=}-\sum_{k=1}^i
(Q_{i,k-1}u_{\alpha-i+k-1}+P_{i,k-1}y_{\alpha-i+k-1}) ,
$
and use \eqref{e:def q and p implies} to conclude that $x^{i,i}_{\alpha}{=}Q_{i,i}u_{\alpha}+P_{i,i}y_{\alpha}$. Conclude that
$
\dx{i}{T-1}^{i,i}=Q_{i,i}\du{i}{T-1}+P_{i,i}\dy{i}{T-1}.
$
Together with \eqref{e:i i 0-i-1}, this proves that \eqref{e:i i 0-T} holds.

Since $\Pi$ is nonsingular, $P_{i,i}$ has full row rank. Therefore, \eqref{e:i i 0-T}  implies that
$
\rs \dy{0}{T-1}\subseteq\rs\bbm \dx{0}{T-1}\\\du{0}{T-1}\ebm.$
It follows from \eqref{e:i 1 1-T} and \eqref{e:i k 1-T} that 
$
\rs \dxfpp \subseteq\rs\bbm \dx{0}{T-1}\\\du{0}{T-1}\ebm .
$
From \eqref{e:state output rsp} we conclude that $\dxf$ is a state for $(\duf,\dyf)$. Note that the number of rows of $\dx{0}{T}$ equals $\sum_{i=0}^{q} is_i$. Since $s_k=\delta_{k-1}-\delta_k$ due to Lemma~\ref{l:si subspaces exist} and since $\delta_q=0$, $\sum_{i=0}^{q} is_i=\sum_{i=0}^{q} \delta_i$. We conclude that $\sy{\sum_{i=0}^{q} \delta_i}\nonempty$. 

To prove the second claim, note that 
$
\syl{q}{\sum_{i=0}^{q} \delta_i}\subseteq\sy{\sum_{i=0}^{q} \delta_i}
$
by definition: it is enough to show that the reverse inclusion holds. To do so, let $\sa\in\sy{\sum_{i=0}^{q} \delta_i}$ and $\ell=\ell(C,A)$. Suppose that $\ell<q$. Since $T-1\geq q$ by  \eqref{e:def q}, we have $T>\ell+1$. Then, Theorem~\ref{t: n lower bound}.\ref{t: n lower bound.1} implies that $\ell\geq q$. This contradicts $\ell<q$. As such, we conclude that $\ell\geq q$. Then, Theorem~\ref{t: n lower bound}.\ref{t: n lower bound.2} implies that $q\geq \ell$, and $\ell=q$. This means that $\sy{\sum_{i=0}^{q} \delta_i}\subseteq\syl{q}{\sum_{i=0}^{q} \delta_i}$. This completes the proof.\EP

\begin{example}[State construction]\label{ex:state construct example}
We  compute a state for the data  in Section~\ref{sec:running example} and  the values $T=5$ and $T=14$ following the procedure in the proof of Theorem~\ref{t:state construct}.

For $T=5$, $\delta_{-1}=\delta_0=2$, and $\delta_k=0$ for $k\in[1,4]$ (see Table~\ref{tab:integers} in Section~\ref{sec:running example}). Use Lemma~\ref{l:si subspaces exist} to conclude that $\dim\calS_0=0$ and $\dim\calS_1=2$. Therefore, $P_{0,j}$ and $Q_{0,j}$ for $j\in\pset{0,1}$ are void matrices and one can choose the following basis matrix for $\calS_1$
\[
\left[\!\begin{array}{c|c|c|c}
\!Q_{1,0}\! &\! Q_{1,1}\! &\! P_{1,0} \!&\! P_{1,1} \! 
\end{array}\!\right]
=
\left[\!\begin{array}{rr|rr|rr|rr}
\!-3\! & \!-1\! &\! -2\! &\! 0 \!& \!1\! & \!0\! &\! 1\! &\! 0\!\\
 \!0 \!& \!-1\! & \! 0\!&\! 0\! &\! 0 \!&\! 0 \!&\! 0\! & \!1\!
\end{array}\!\right].
\]
In view of \eqref{e:i k 0}-\eqref{e:i 1 1-T}, these choices yield the state 
\[
x_{[0,5]}=
\left[\!\begin{array}{rrrrrr}
\!0\!& \!1\! &\! 0\!&\! 1\!&\!0\!&\!1\!\\
\!1\!& \!0\! &\! 0\!&\! 0\!&\!1\!&\! 1\!
\end{array}\!\right].
\]
Solving  \eqref{e:explains}, we obtain the following  explaining system:
\beq\label{e:first exp}
\left[\begin{array}{c|c} A_1 & B_1 \\\hline  C_1 & D_1\end{array}\right]=
\left[\!\begin{array}{rr|rr}
\!-1\!& \!0\! &\! 1\!&\! 1\!\\
\! 0\!&\! 0\! &\! 0\!&\! 1\!\\
\hline
\! 1\!& \!0\! &\! 2\!&\! 0\!\\
\! 0\!& \!1\! &\! 0\!&\! 0\!
\end{array}\!\right].
\eeq
Evidently $ (u_{[0,4]},y_{[0,4]})$ are not informative for system identification: they are explained by a minimal system with 2 states.

For $T=14$, $\delta_{-1}=\delta_0=2$, $\delta_1=1$, and $\delta_k=0$ for $k\in[2,13]$ (see Table~\ref{tab:integers} in Section~\ref{sec:running example}). Moreover, $(u_{[0,13]},y_{[0,13]})$ are informative for system identification for all  values of $L_+$ and $N_+$  in Table~\ref{tab:data info}. As such, we can use Theorem~\ref{t:state construct} to identify an isomorphic system to the true one. We first apply  Lemma~\ref{l:si subspaces exist} and conclude that $\dim\calS_0=0$, $\dim\calS_1=1$, and $\dim\calS_2=1$. Therefore, $P_{0,j}$ and $Q_{0,j}$ for $j\in\pset{0,1}$ are void matrices and one can choose the following bases matrices for $\calS_1$ and $\calS_2$
\[
\left[\!\begin{array}{c|c|c|c}
\!Q_{1,0}\! &\! Q_{1,1}\! &\! P_{1,0} \!&\! P_{1,1} \! 
\end{array}\!\right]
=
\left[\!\begin{array}{rr|rr|rr|rr}
\!-1\!&\!0 \!&\!   -1\!&\!0\!&\!0 \!&\! -1\!&\!1\!&\!0\!
\end{array}\!\right].
\]
\begin{gather*}
\left[\!\begin{array}{c|c|c|c|c|c}
\!Q_{2,0}\! &\! Q_{2,1}\! &\! Q_{2,2}\! &\! P_{2,0} \!&\! P_{2,1} \! &\! P_{2,2}
\end{array}\!\right]\\
\shortparallel\\
\left[\!\begin{array}{rr|rr|rr|rr|rr|rr}
\!0\!&\!-1\!&\! 0\!&\!-1\!&\! 0\!&\! 0\!&\! 0\!&\! 0\!&\! 0\!&\! 0\!&\! 0\!&\! 1\!
\end{array}\!\right].
\end{gather*}
In view of \eqref{e:i k 0}-\eqref{e:i k 1-T}, these choices yield the state 
\[
x_{[0,14]}=
\left[\!\begin{array}{rrrrrrrrrrrrrrr}
\! 1\!&\! 2\!&\! 1\!&\! 1\!&\! 0\!&\! 1\!&\! 2\!&\! 2\!&\! 3\!&\! 2\!&\! 2\!&\! 2\!&\! 2\!&\! 3\!&\! 2\!\\
\! 0\!&\! 0\!&\! 0\!&\! 0\!&\! 1\!&\! 1\!&\! 1\!&\! 1\!&\! 1\!&\! 1\!&\! 1\!&\! 1\!&\! 1\!&\! 0\!&\! 0\!\\
\! 1\!&\! 0\!&\! 0\!&\! 0\!&\! 1\!&\! 2\!&\! 2\!&\! 2\!&\! 2\!&\! 2\!&\! 2\!&\! 2\!&\! 2\!&\! 1\!&\! 0\!
\end{array}\!\right]\; .
\]
Solving \eqref{e:explains} we obtain the  explaining system
\[
\left[\begin{array}{c|c} A_2 & B_2 \\\hline  C_2 & D_2\end{array}\right]=
\left[\!\begin{array}{rrr|rr}
\! 0\!&\! 0\!&\! 1\!&\! 1\!&\! 0\!\\
\! 0\!&\! 0\!&\! 0\!&\! 0\!&\! 1\!\\
\! 0\!&\! 1\!&\! 0\!&\! 0\!&\! 1\!\\\hline
\! 1\!&\! 0\!&\! 0\!&\! 1\!&\! 0\!\\
\! 0\!&\! 0\!&\! 1\!&\! 0\!&\! 0\!
\end{array}\!\right]\; ,
\]
that is isomorphic to the true system.
\end{example}
\section{State-input data Hankel matrices}\label{sec:SI Hankel}
To prove the sufficiency part of Theorem~\ref{t:main} we need auxiliary results on the ranks of state-input data Hankel matrices.

\subsection{On the ranks of state-input data Hankel matrices}

Let $\sa\in\sy{n}$ and $\dxf$ be a state for $\sa$. Define
$$
J_k(x):=\bbm \dx{0}{T-k-1}\\H_{k}(\duf)\ebm
$$
for $k\in[0,T-1]$. Given the structure of $J_k$ we conclude that
\beq\label{e: from jk-1 to jk}
\lk J_{k-1}(x)\times{\bz}_m\subseteq \lk J_k(x)
\eeq
for every $k\in[1,T-1]$. 

We now study the relation of $\rank~H_k$ and $\rank~J_k(x)$. Recall from \eqref{e:hankel data from system and state} that 
$
H_k=\Phi_k J_k(x)$ and $G_k{=}\Psi_k J_k(x).
$
\begin{lemma}\label{l: rank xu H and G} Assume that $T\geq\ell+1$. 
Let $\sa\in\syl{\ell}{n}$ and $\dxf$ be a state for $\sa$, and define $d:=\max(\ell-1,0)$. The following statements hold:  
\benabc
\item\label{l: obs implies jk = hk} If $(C,A)$ is observable, then $\rank J_k(x)=\rank H_k$ for all $k\in[d,T-1]$.
\item\label{l: rank xu H and G.2} If $J_i(x)$ has full row rank for some $i\in[0,T-1]$, then for each $k\in[0,i]$ $J_k(x)$ has full row rank, $\rank H_k=(k+1)m+\rank \Omega_k$, and $\rank G_k=(k+1)m+\rank \Omega_{k-1}$.
\end{enumerate}
\end{lemma}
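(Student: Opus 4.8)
The plan is to read both parts off the factorizations $H_k=\Phi_k J_k(x)$ and $G_k=\Psi_k J_k(x)$ recorded in \eqref{e:hankel data from system and state}, combined with the elementary rank identities $\rank\Phi_k=(k+1)m+\rank\Omega_k$ and $\rank\Psi_k=(k+1)m+\rank\Omega_{k-1}$. These identities follow from the block forms in \eqref{e:Phi and Psi}: using the identity block in the top block-row to eliminate $\Theta_k$ (respectively $\Theta_{k-1}$) by row operations reduces $\Phi_k$ and $\Psi_k$ to matrices whose only non-identity content is $\Omega_k$ and $\Omega_{k-1}$, respectively, from which the ranks are immediate; I would check the boundary case $k=0$, where $\Omega_{-1}$ is void and the claimed value is $m$, separately. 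I will also use the two standard facts that left-multiplication by a matrix of full column rank, and right-multiplication by a matrix of full row rank, each preserve rank.

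For \ref{l: obs implies jk = hk}: if $(C,A)$ is observable then Lemma~\ref{p: sum rk}.\ref{p: sum rk.bb} and \ref{p: sum rk.ee} give $\rank\Omega_k=n$ for every $k\geq\ell-1$, hence for every $k\geq d=\max(\ell-1,0)$ (when $\ell=0$, observability forces $n=0$ and all the relevant blocks are void). From the block form of $\Phi_k$ one reads off $\ker\Phi_k=\{(v,0):v\in\ker\Omega_k\}$, so $\rank\Omega_k=n$ is exactly the statement that $\Phi_k$ has full column rank; therefore $\rank H_k=\rank(\Phi_k J_k(x))=\rank J_k(x)$ for all $k\in[d,T-1]$. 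The hypothesis $T\geq\ell+1$ is used only to guarantee that this index set is non-empty.

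For \ref{l: rank xu H and G.2}: the key point is that full row rank of $J_i(x)$ propagates \emph{downward} in $k$. Indeed, \eqref{e: from jk-1 to jk} reads $\lk J_{k-1}(x)\times\bz_m\subseteq\lk J_k(x)$ for $k\in[1,T-1]$, so $\lk J_k(x)=\{0\}$ forces $\lk J_{k-1}(x)=\{0\}$; starting from $\lk J_i(x)=\{0\}$ and iterating down to $k=0$ shows that $J_k(x)$ has full row rank for every $k\in[0,i]$. For each such $k$, right-multiplication by the full-row-rank matrix $J_k(x)$ preserves rank, so $\rank H_k=\rank\Phi_k=(k+1)m+\rank\Omega_k$ and $\rank G_k=\rank\Psi_k=(k+1)m+\rank\Omega_{k-1}$ by the identities above (equivalently, one may substitute $\lk J_k(x)=\{0\}$ into \eqref{e:rank hk-char}--\eqref{e:rank gk-char}).

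I do not anticipate a real obstacle. The only points needing care are the bookkeeping of block sizes and of void matrices in the evaluation of $\rank\Phi_k$ and $\rank\Psi_k$, and the direction of the inclusion \eqref{e: from jk-1 to jk}: it makes full row rank descend from larger $k$ to smaller $k$, so the induction for \ref{l: rank xu H and G.2} must be run downward from the given index $i$, not upward.
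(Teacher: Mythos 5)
Your proof is correct and follows essentially the same route as the paper's: observability makes $\Phi_k$ have full column rank for $k\geq d$ (giving \ref{l: obs implies jk = hk}), and for \ref{l: rank xu H and G.2} full row rank of $J_i(x)$ is propagated down to all $k\in[0,i]$ via \eqref{e: from jk-1 to jk}, after which the ranks of $H_k$ and $G_k$ equal those of $\Phi_k$ and $\Psi_k$. Your treatment of the void/boundary cases and the explicit remark that the induction must run downward are just more detailed versions of what the paper leaves implicit.
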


\begin{proof}
If $(C,A)$ is observable, then $\Phi_k$ has full column rank for $k\geq d$; statement  \ref{l: obs implies jk = hk}  is proved. To prove \ref{l: rank xu H and G.2}, note first that $J_k(x)$ has full row rank whenever $k\in[0,i]$ due to \eqref{e: from jk-1 to jk}. For the rest, observe that \eqref{e:hankel data from system and state} implies $\rank H_k=\rank \Phi_k$ and $\rank G_k=\rank \Psi_k$ whenever $J_k(x)$ has full row rank. From the definitions, we have $\rank \Phi_k=(k+1)m+\rank\Omega_k$ and $\rank \Psi_k=(k+1)m+\rank\Omega_{k-1}$; the claim is proved.\end{proof}

An interesting and useful consequence of Lemma \ref{l: rank xu H and G} is related to the isomorphism property.

\begin{lemma}\label{l:iso prop}
Suppose that $T\geq \ell+1$. Let $\sa\in\syl{\ell}{n}\cap\calO$ and $\dxf$ be a state for $\sa$. If $J_{\ell}(x)$ has full row rank, then $\syl{\ell}{n}\cap\calO$ has the isomorphism property.
\end{lemma}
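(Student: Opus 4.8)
The plan is to show that any two systems in $\syl{\ell}{n}\cap\calO$ share, up to state isomorphism, the \emph{same} state trajectory, namely the one manufactured from the full-row-rank matrix $J_\ell(x)$. First I would fix $\sa\in\syl{\ell}{n}\cap\calO$ together with a state $\dxf$ for it such that $J_\ell(x)$ has full row rank. By Lemma~\ref{l: rank xu H and G}\ref{l: rank xu H and G.2} (applied with $i=\ell$), for every $k\in[0,\ell]$ the matrix $J_k(x)$ has full row rank and $\rank H_k=(k+1)m+\rank\Omega_k$, $\rank G_k=(k+1)m+\rank\Omega_{k-1}$. In particular $\rank H_\ell-\rank G_\ell=\rank\Omega_\ell-\rank\Omega_{\ell-1}=\rho_\ell=0$ by Lemma~\ref{p: sum rk}\ref{p: sum rk.bb}, so $\ell\geq q$ is consistent; more importantly, since $(C,A)$ is observable, $n=\rank\Omega_\ell=\rank H_\ell-(\ell+1)m$, an integer \emph{determined by the data} (via $H_\ell$, which makes sense because $T\geq\ell+1$). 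Hence every system in $\syl{\ell}{n}\cap\calO$ has this same $n$, and the key identity is
\[
\rank H_\ell=(\ell+1)m+n,\qquad \rank G_\ell=(\ell+1)m+\rank\Omega_{\ell-1}.
\]

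Next I would take a second system $\sap\in\syl{\ell}{n}\cap\calO$ with a state $\dx{0}{T}'$ for it, so that $H_\ell=\Phi_\ell' J_\ell(x')$ where $\Phi_\ell'$ has full column rank (observability, $k=\ell\geq d$). Then $\rank J_\ell(x')=\rank H_\ell=(\ell+1)m+n$, and since $J_\ell(x')$ has exactly $(\ell+1)m+n$ rows, it \emph{also} has full row rank. Thus the hypothesis on $J_\ell$ is automatically inherited by every observable explaining system of the right dimension. Now I compare the two full-row-rank matrices $J_\ell(x)$ and $J_\ell(x')$: both are $((\ell+1)m+n)\times(T-\ell)$ and both satisfy $\Phi_\ell J_\ell(x)=H_\ell=\Phi_\ell' J_\ell(x')$, while their lower block $H_\ell(\duf)$ is \emph{identical}. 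From the bottom $(\ell+1)m$ rows being the same and both matrices having full row rank, I would deduce that $\rs J_0(x)=\rs J_0(x')$ — more carefully, I would argue at the level $k=0$: $J_0(x)=\col(\dx{0}{T-1},\duf)$ and $J_0(x')=\col(\dx{0}{T-1}',\duf)$ both have full row rank $m+n$, so each has a left kernel of the same dimension, and I want to show these row spaces coincide. The cleanest route is: $\rs\bbm\dx{0}{T-1}\\\duf\ebm=\rs\bbm\dx{0}{T-1}'\\\duf\ebm$ because both equal the preimage structure forced by $\rs H_0$ and $\rs G_0$; since the $u$-rows are shared and independent of the $x$-rows (full row rank), there is a matrix identity $\dx{0}{T-1}'=S\,\dx{0}{T-1}+K\duf$ for some $S\in\Rnn$, $K\in\Rnm$, and symmetrically, forcing $S$ nonsingular.

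The main obstacle — and where I would spend the most care — is passing from "the $x'$-rows lie in $\rs\col(\dx{0}{T-1},\duf)$" to an \emph{actual} nonsingular state transformation relating $\sa$ and $\sap$, i.e. genuinely identifying the pair $(S,K)$ and checking it conjugates $A,B,C,D$ into $A',B',C',D'$. The affine term $K\duf$ is a nuisance: a priori $x'$ could be an affine, not linear, image of $x$. I expect to kill it using the Hankel/shift structure — the same data $(\duf,\dyf)$ appears in overlapping windows, and applying the state update $\dxfpp=A\dxfm+B\duf$ for \emph{both} systems, together with the full-row-rank of $J_\ell$ at consecutive depths, forces $K=0$ after matching columns (this is the discrete analogue of the uniqueness of the reachable state map). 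Once $\dx{0}{T}'=S\dxf$ with $S$ nonsingular is established, plugging into \eqref{e:explains} for both systems and cancelling the full-row-rank matrix $J_0(x)$ yields $A'=S A S\inv$, $B'=SB$, $C'=CS\inv$, $D'=D$, i.e. the two systems are isomorphic; since $\sa$ and $\sap$ were arbitrary in $\syl{\ell}{n}\cap\calO$, this set has the isomorphism property. I would also note the degenerate case $\ell=0$ (memoryless, $n=0$): then $J_\ell$ having full row rank just says $\duf$ has full row rank $m$, $H_0=\bbm\duf\\\dyf\ebm$ has rank $m$, so $\dyf=D\duf$ determines $D$ uniquely and the isomorphism property is trivial.
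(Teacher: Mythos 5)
Your overall strategy is the paper's: compare the state trajectories of two systems in $\syl{\ell}{n}\cap\calO$ through full-row-rank state--input matrices, obtain a relation $x'_t=Sx_t+Ku_t$ with $S$ nonsingular, kill the feedthrough term $K$, and read off the isomorphism by cancelling $J_0(x)$. The first part is correct and matches the paper: by Lemma~\ref{l: rank xu H and G}\ref{l: obs implies jk = hk}--\ref{l: rank xu H and G.2}, $\rank H_\ell=(\ell+1)m+n$ and every observable explaining system of the same dimensions inherits full row rank of its $J_\ell$. Your mechanism for $K=0$ --- substitute $x'_t=Sx_t+Ku_t$ into both state updates to get $(A'S-SA)x_t+(A'K+B'-SB)u_t-Ku_{t+1}=0$, a left annihilator of (a row permutation of) $J_1(x)$, which has full row rank for $\ell\geq1$ --- is sound and in fact more direct than the paper's route through the Markov parameters, \emph{provided} the relation $x'_t=Sx_t+Ku_t$ on all of $[0,T-1]$ has already been established.

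That is where the genuine gap sits, at the step you treat as routine. The claim $\rs J_0(x)=\rs J_0(x')$ ``because both equal the preimage structure forced by $\rs H_0$ and $\rs G_0$'' fails for $\ell\geq2$: from $H_0=\Phi_0J_0(x)$ with $\Phi_0=\bbm 0&I\\ C&D\ebm$ one only gets $\rs H_0=(\rs\Phi_0)J_0(x)\subseteq\rs J_0(x)$, with equality iff $\Phi_0$ has full column rank, i.e.\ iff $\rank C=n$; so $\rs H_0$ and $\rs G_0$ do not determine $\rs J_0(x)$, and full row rank of both $J_0(x)$ and $J_0(x')$ does not make their row spaces coincide. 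The depth at which observability lets you pin the row space to the data is $\ell$, where $\Phi_\ell$ has full column rank and hence $\rs J_\ell(x)=\rs H_\ell=\rs J_\ell(x')$. But matching the bottom $(\ell+1)m$ rows there only yields the \emph{anticipative} relation $x'_t=Sx_t+\sum_{j=0}^{\ell}K_ju_{t+j}$ on the shorter window $t\in[0,T-\ell-1]$; feeding that into the state update produces a left annihilator of $J_{\ell+1}(x)$, whose full row rank you do not have, and the output equation only gives $C'K_j=0$, not $K_j=0$. Eliminating the anticipative coefficients is precisely where the paper does its real work: from $J_\ell^1=MJ_\ell^2$ and full row rank it deduces $\Phi^2_\ell=\Phi^1_\ell M$, hence $\Omega^2_\ell=\Omega^1_\ell S$ and $\Theta^2_\ell=\Omega^1_\ell\bbm R&0\ebm+\Theta^1_\ell$, and then exploits the Toeplitz structure of $\Theta_\ell$ together with observability of $(C_1,A_1)$ to force $R=0$. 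An argument of this kind must be inserted before your ``$K=0$'' step can even be posed in the form you state it.
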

\begin{proof}
 Lemma~\ref{l: rank xu H and G}.\ref{l: rank xu H and G.2} implies that
$
\rank H_\ell=(\ell+1)m+n.
$
Let $i\in[1,2]$,
$\sai\in \syl{\ell}{n}\cap\calO$ and let $x^i_{[0,T]}$ be a state for $\sai$. Denote $J_k(x^i)$ by $J^i_k$. Because of observability, Lemma~\ref{l: rank xu H and G}.\ref{l: obs implies jk = hk} implies that $J^i_{\ell}$ has full row rank and statement \ref{l: rank xu H and G.2} implies that $J^i_{k}$ has full row rank for all $k\in[0,\ell]$. In particular, $J^i_0$ has full row rank: there exist  $S\in\R^{n\times n}$, $R\in\R^{n\times m}$, $Q\in\R^{m\times n}$, and $P\in\R^{m\times m}$ such that
\begin{equation}\label{eq:relation}
J_{0}^1=\begin{bmatrix}
S&R\\
Q&P
\end{bmatrix}
J_{0}^2. 
\end{equation}
The last $m$ rows of the matrices $J_{0}^i$ are identical; conclude that $\begin{bmatrix}
Q&P-I
\end{bmatrix}J_{0}^2 =0$. Now $J_{0}^2$ has full row rank;  conclude that $Q=0$ and $P=I$. Using the same argument, \eqref{eq:relation} leads to 
$$
J_{\ell}^1=\bbm
S& R & 0 \\
0 & I & 0 \\
0 & 0 & I_{\ell m} 
\ebm
J_{\ell}^2.
$$
Since $J_{\ell}^i$ has full row rank, it follows from \eqref{e:hankel data from system and state} that
$$
\Phi^2_{\ell}=\Phi^1_{\ell}\bbm
S& R & 0 \\
0 & I & 0 \\
0 & 0 & I_{\ell m} 
\ebm\; ,
$$
where $\Phi^i_{\ell}:=\Phi_{\ell}(A_i,B_i,C_i,D_i)$. Using \eqref{e:Phi and Psi}, we see that
\begin{align}
\Omega^2_{\ell}&=\Omega^1_{\ell}S\label{e:omegas rel}\\
\Theta^2_{\ell}&=\Omega^1_{\ell}\bbm R & 0\ebm+\Theta^1_{\ell}\; .\label{e:thetas rel}
\end{align}
Equation \eqref{e:omegas rel} implies that $S$ is nonsingular due to observability whereas \eqref{e:thetas rel} implies that the last $\ell m$ columns of $\Theta^1_{\ell}$ and $\Theta^2_{\ell}$ are identical.
Given their Toeplitz structure, we see that \beq\label{e:d1=d2}
D_1=D_2
\eeq
and $C_1A_1^kB_1=C_2A_2^kB_2$ for  $k\in[0,\ell-2]$. Consequently the entries corresponding to the first $\ell p$ rows and first $m$ columns of $\Theta^1_{\ell}-\Theta^2_{\ell}$ are all zero. Hence, \eqref{e:thetas rel} implies $\Omega^1_{\ell-1}R=0$. Now $R=0$ since $(C_1,A_1)$ is observable. Therefore, \eqref{e:thetas rel} implies $\Theta^2_{\ell}=\Theta^1_{\ell}$. Comparing the first $p$ rows in \eqref{e:omegas rel}, we see that
\beq\label{e:c1=c2}
C_2=C_1 S.
\eeq
From $\Theta^2_{\ell}=\Theta^1_{\ell}$, we have
\beq\label{e:c1a1b1=c2a2b2}
C_2A_2^kB_2=C_1A_1^kB_1
\eeq
for every $k\in[0,\ell-1]$. Note that
$\Omega^1_{\ell-1}S B_2\overset{\eqref{e:omegas rel}}{=}\Omega^2_{\ell-1}B_2\overset{\eqref{e:c1a1b1=c2a2b2}}{=}\Omega^1_{\ell-1}B_1$. From the observability of $(C_1,A_1)$ conclude that
\beq\label{e:b1=b2}
SB_2=B_1.
\eeq
Note that
$
\Omega_{\ell-1}^1(SA_2-A_1S)\overset{\eqref{e:omegas rel}}{=}\Omega_{\ell-1}^2A_2-\Omega_{\ell-1}^1A_1S\overset{\eqref{e:omegas rel}}{=}0.
$
As $(C_1,A_1)$ is observable, we see that $SA_2-A_1S=0$; with \eqref{e:d1=d2}, \eqref{e:c1=c2}, and \eqref{e:b1=b2}, this proves that $\sai$ are isomorphic. Hence $\syl{\ell}{n}\cap\calO$ has the isomorphism property.
\end{proof}

Given \eqref{e: from jk-1 to jk} and the rank-nullity theorem, 
$
\rank J_{k-1}+m\geq \rank J_{k}
$
for all $k\in[1,T-1]$. Such relation between the ranks of two consecutive $J_k$-matrices is related to the controllability of the corresponding explaining system. 

\blem\label{l:uncontrollability}
Let $\sa\in\syl{\ell}{n}$ and $\dxf$ be a state for $\sa$. If for some $k\in[1,T-1]$ $\rank J_{k-1}(x)+m=\rank J_{k}(x)$ and $J_k(x)$ does not have full row rank, then $(A,B)$ is not controllable. 
\elem

\begin{proof}
Let $k\in[1,T-1]$ be such that $\rank J_{k-1}(x)+m=\rank J_{k}(x)$ and $J_k(x)$ does not have full row rank. From the rank-nullity theorem, we have
$
\dim \lk J_{k-1}(x)=\dim \lk J_{k}(x)>0.
$
Then, we see from \eqref{e: from jk-1 to jk} that 
\beq\label{e:lk jk-1=lk jk}
\zset\neq \lk J_{k}(x)=\lk J_{k-1}(x)\times{\bz}_m
\eeq
Now, define $\calA_k\in\R^{(n+(k+1)m)\times (n+(k+1)m)}$ by $
\calA_k:=\begin{bsmallmatrix}
A & B & 0 \\
0 & 0 & I_{km}\\
0_{m,n} & 0 & 0
\end{bsmallmatrix}$. 
The relation $\dxfp=A\dxf+B\duf$ implies that $
\calA_k J_k=
\begin{bsmallmatrix}
\dx{1}{T-k}\\
H_{k-1}(\du{1}{T-1})\\
0_{m,T-k}
\end{bsmallmatrix}$ for every $k\in[0,T-1]$. Note that the matrix $\begin{bmatrix}
\dx{1}{T-k}\\
H_{k-1}(\du{1}{T-1})
\end{bmatrix}$ can be obtained from $J_{k-1}(x)$ by deleting its first column. Hence, we see that
$
(\lk J_{k-1}(x)\times {\bz}_m)\calA_k \subseteq \lk J_k(x)
$
for every $k\in[1,T-1]$. Together with \eqref{e:lk jk-1=lk jk}, this implies that $\zset\neq\lk J_{k-1}(x)\times{\bz}_m$ is left-invariant under $\calA_k$, that is $(\lk J_{k-1}(x)\times{\bz}_m)\calA_k\subseteq\lk J_{k-1}(x)\times{\bz}_m$. Therefore, the subspace $\lk J_{k-1}(x)\times{\bz}_m$ must contain a left-eigenvector of $\calA_k$, say $\zeta\in\C^{1\times (n+(k+1)m)}$. Note that the last $m$ entries of $\zeta$ are zero. Together with the structure of $\calA_k$, this implies that
$\zeta=\begin{bmatrix}
\xi&0_{1,(k+1)m}
\end{bmatrix}$
where $\xi$ is nonzero. Hence, we see that $\xi\begin{bmatrix}
A-\lambda I&B
\end{bmatrix}=0$ for some $\lambda\in\C$. It, then, follows from the Hautus test that $(A,B)$ is uncontrollable. 
\end{proof}

\subsection{Proof of Theorem~\ref{t:main}: sufficiency part}\label{sec:t:mainsuff}
In view of Proposition~\ref{p:streamline} and \eqref{e:trimming wrt la}, proving sufficiency of Theorem~\ref{t:main} requires showing that the conditions \eqref{e:nec suff conditions} imply:
\benabcrm
\item\label{i:si def a with la} $\calE_{[L_-,\la],[N_-,N_+]}\cap\calM=\sy{\nt}\cap\calM$, and
\item\label{i:si def b with la} $\sy{\nt}\cap\calM$ has the isomorphism property.
\end{enumerate}
To this end, we need some preparations.

To begin with, it is clear from the definitions of $\lm$ and $\nm$ that $\calE\cap\calS(\ell,n)=\emptyset$ whenever $\ell<\lm$ or $n<\nm$. Therefore, we see from \eqref{e:lb l} and \eqref{e:lb n} that
\beq\label{e:trim up to lm and nm}
\calE_{[L_-,\la],[N_-,N_+]}=\calE_{[\lm,\la],[\nm,N_+]}.
\eeq

Next, we compute the ranks of the data Hankel matrices $H_k$. Let $\sa\in\sy{\nm}$ and let $\dxf\in\R^{\nm\times(T+1)}$ be a state for $\sa$. Note that $\lm\leq\lt\leq L_+$ and $\lm\leq\ld=N_+-\nm+\lm$. As such, we have $\lm\leq\la=\min(L_+,\ld)$. Due to Theorem~\ref{t:nmin}, $(C,A)$ is observable. Since $m\geq 1$, \eqref{e:lb T} implies that $T\geq \la+1$. Therefore, it follows from Lemma~\ref{l: rank xu H and G}.\ref{l: obs implies jk = hk} that 
$
\rank J_k(x)=\rank H_k
$
for every $k\in[d,\la]$ where $d=\max(\lm-1,0)$. In particular, we see from \eqref{e:main rank condition} that $\rank J_{\la}(x)=(\la+1)m+\nm$ and hence $J_{\la}(x)$ has full row rank. It  follows from Lemma~\ref{l: rank xu H and G}.\ref{l: rank xu H and G.2} that
\beq\label{e: rank of hk if hla is full}
\rank J_k(x)=\rank H_k=(k+1)m+\nm
\eeq
for every $k\in[d,\la]$. We claim that 
\beq\label{e:suff claim 1}
\calE_{[\lm,\la],[\nm,N_+]}\cap\calM\subseteq\calE(\nm)\cap\calM.
\eeq

Suppose first that $N_+=\nm$. Then, \eqref{e:suff claim 1} follows from 
$
\calE_{[\lm,\la],[\nm,\nm]}\subseteq\calE(\nm)$. Suppose now that $N_+>\nm$. Let $\ell\in[\lm,\la]$, $n\in[\nm+1,N_+]$, and $\sah\in\syl{\ell}{n}\cap\calO$. Also, let $\dhxf\in\R^{n\times (T+1)}$ be a state for $\sah$. $\ell\geq 1$ since $(C,A)$ is observable and $n\geq 1$. Since $\ell\geq \lm$, we further see that  $\ell-1\geq d$. Then, Lemma~\ref{l: rank xu H and G}.\ref{l: obs implies jk = hk} and \eqref{e: rank of hk if hla is full} imply that
$
\rank J_{\ell-1}(\hatx)+m=\rank J_\ell(\hatx)=(\ell+1)m+\nm.
$
Since $n>\nm$, Lemma~\ref{l:uncontrollability} implies that $(\hatA,\hatB)$ is not controllable. Therefore, we see that $\calE(\ell,n)\cap\calM=\emptyset$ whenever $\ell\in[\lm,\la]$ and $n\in[\nm+1,N_+]$. Hence, \eqref{e:suff claim 1} holds. 

Note that
$
\calE(\lt,\nt)\subseteq
\calE_{[\lm,\la],[\nm,N_+]}.
$
Then, it follows from \eqref{e:suff claim 1} that
\begin{align}
\calE(\lt,\nt)\cap\calM
&\subseteq\calE_{[\lm,\la],[\nm,N_+]}\cap\calM\notag\\
&\subseteq \syl{\lm}{\nm}\cap\calM\label{e:with inclusion}.
\end{align}
Therefore, we see that 
\beq\label{e:lm=lt and nm=nt}
\lt=\lm\qand\nt=\nm,
\eeq
proving \eqref{e:extra-lm} and \eqref{e:extra-nm}. We conclude from \eqref{e:with inclusion} and Theorem~\ref{t:nmin} that
\begin{align}
\calE(\lt&,\nt)\cap\calM
=\calE_{[\lm,\la],[\nm,N_+]}\cap\calM\notag\\
&= \syl{\lm}{\nm}\cap\calM=\sy{\nm}\cap\calM\label{e:all equal}.
\end{align}
Thus, condition \ref{i:si def a with la} follows from \eqref{e:trim up to lm and nm}, \eqref{e:lm=lt and nm=nt}, and \eqref{e:all equal}. 

To show \ref{i:si def b with la}, note first that \eqref{e: rank of hk if hla is full} and Lemma~\ref{l:iso prop} imply that 
$\calE(\lm,\nm)\cap\calO$ has the isomorphism property. Since the true system is minimal, we see that 
\beq\label{e:min and obs the same}
\calE(\lm,\nm)\cap\calO=\calE(\lm,\nm)\cap\calM.
\eeq
Then, it follows from \eqref{e:all equal} that \ref{i:si def b with la} holds. What remains to be proven is \eqref{e:extra-main}. To do so, note first that Theorem~\ref{t:nmin} implies that $\calE(\nm)=\calE(\lm,\nm)\cap\calO$. Then, we see from \eqref{e:all equal} and \eqref{e:min and obs the same} that 
$
\calE(\nm)=\calE_{[\lm,\la],[\nm,N_+]}\cap\calM.
$
Therefore, \eqref{e:extra-main} follows from 
\eqref{e:trim 2}, \eqref{e:trimming wrt la}, and \eqref{e:trim up to lm and nm}.
\EP

\section{From one explaining system to another}\label{sec: from one to another}
To prove  necessity  in Theorem~\ref{t:main}, we present two ways of computing explaining systems. The first one constructs an explaining system with $n$ states from a given one with $n$ states.
\begin{lemma}\label{l:from one to another}
Suppose that $\sa\in\syl{\ell}{n}\cap\calO$ for some $n\geq \ell\geq 1$. Let $\dxf\in\R^{n\times(T+1)}$ be a state for $\sa$. Also, let $d=\min(\ell,T-1)$, $\xi\in\R^{1\times n}$ and $\eta_i\in\R^{1\times m}$ with $i\in[0,d]$ be such that
\beq\label{e:xi etas in lker}
\xi\dx{0}{T-d-1}+\sum_{i=0}^{d}\eta_i\du{i}{T-d-1+i}=0.
\eeq
Let $0\neq \zeta\in\R^n$ be such that
\beq\label{e:spec zeta}
CA^i\zeta=0\,\,\text{ for }\,\,i\in[0,\ell-2].
\eeq
Define
\begin{alignat}{3}
\hatA&=A+\zeta\xi & \qquad\hatB&=B+E_{-1}\label{e:choice hata}\\
\hatC&=C &
\hatD&=D+CE_0\label{e:choice hatd}
\end{alignat}
where $E_0$ and $E_{-1}$ are determined by the recursion
\beq
E_d=0_{n,m}\qand E_{i-1}=\hatA E_{i}+
\zeta\eta_{i}\,\,\text{ for }\,\,i\in[0,d] \label{e:choice ei}.
\eeq
Then, the following statements hold:
\benabc
\item\label{l:from one to another.1} $\sah\in\calE(\ell,n)\cap\calO$.
\item\label{l:from one to another.2} If $\sa$ and $\sah$ are isomorphic, then
\beni
\item\label{l:iso prop-nec-aux.2} $\eta_i=0$ for every $i\in[0,d]$.
\item\label{l:iso prop-nec-aux.1} $\xi A^iB=0$ for every $i\in[0,n-1]$,
\end{enumerate}
\end{enumerate}

\end{lemma}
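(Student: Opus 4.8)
The plan is to prove \ref{l:from one to another.1} by exhibiting an explicit state sequence for $\sah$ built from $\dxf$, $\duf$, $\dyf$ and the $E_i$'s, and to prove \ref{l:from one to another.2} by showing that any state-isomorphism between $\sa$ and $\sah$ is forced to be the identity. The algebraic fact that drives everything is that $CA^{a}\zeta=0$ for $a\in[0,\ell-2]$ (see \eqref{e:spec zeta}): expanding $(A+\zeta\xi)^{k}$ and observing that each monomial carrying a factor $\zeta\xi$ opens with some power $A^{a}$, $a\le k-1$, one obtains
\[
C\hatA^{k}=CA^{k}\ \ (0\le k\le\ell-1),\qquad CA^{m}\hatA^{r}=CA^{m+r}\ \ (m+r\le\ell-1).
\]
Two consequences: the observability matrix of $(\hatC,\hatA)$ coincides with $\Omega_{\ell-1}$ through row $\ell-1$, so $(\hatC,\hatA)$ is observable and $\ell(\hatC,\hatA)=\ell$; and, by induction on \eqref{e:choice ei}, each $E_{j}$ ($j\in[0,d]$) has its columns in $\spn\{A^{a}\zeta:0\le a\le d-1-j\}$, whence $CE_{j}=0$ for $j\in[1,d-1]$ since $d\le\ell$.

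Next I would set $\hatx_{0}:=x_{0}-\sum_{j=0}^{d-1}E_{j}u_{j}$ and $\hatx_{t+1}:=\hatA\hatx_{t}+\hatB u_{t}$ for $t\in[0,T-1]$, so that the state equation of $\sah$ holds by construction. Writing \eqref{e:xi etas in lker} column-wise as $\xi x_{t}+\sum_{i=0}^{d}\eta_{i}u_{t+i}=0$ (valid for $t\in[0,T-d-1]$) and using \eqref{e:choice ei}, an induction on $t$ shows $\hatx_{t}=x_{t}-\sum_{j=0}^{d-1}E_{j}u_{t+j}$ for $t\in[0,T-d]$. For those $t$ the output equation $y_{t}=\hatC\hatx_{t}+\hatD u_{t}$ is immediate from $\hatD=D+CE_{0}$ (see \eqref{e:choice hatd}) and $CE_{j}=0$ for $j\in[1,d-1]$.

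The hard part — the step I expect to be the main obstacle — is verifying the output equation for the tail indices $t\in[T-d+1,T-1]$, where the closed form for $\hatx_{t}$ is unavailable because it would require inputs beyond $u_{T-1}$. There I would substitute the closed form for $\hatx_{T-d}$ into $\hatx_{t}=\hatA^{\,t-T+d}\hatx_{T-d}+\sum_{s}\hatA^{\,t-1-s}\hatB u_{s}$ and compare with the true response $y_{t}=CA^{\,t-T+d}x_{T-d}+\sum_{s}CA^{\,t-1-s}Bu_{s}+Du_{t}$; using $C\hatA^{k}=CA^{k}$ ($k\le\ell-1$), the telescoped identities $E_{-1}=\hatA^{\,j+1}E_{j}+\sum_{i=0}^{j}\hatA^{i}\zeta\eta_{i}$ (which follow from \eqref{e:choice ei}), and $CA^{a}\zeta=0$ ($a\le\ell-2$), I expect every coefficient of each $u_{T-d+j}$ in the difference to collapse to zero. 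This finishes \ref{l:from one to another.1}: $\dxf[0]$-type data show $\sah\in\calE(n)$, and together with observability and $\ell(\hatC,\hatA)=\ell$ this gives $\sah\in\calE(\ell,n)\cap\calO$.

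For \ref{l:from one to another.2}, suppose $\sa$ and $\sah$ are isomorphic via a nonsingular $S$, so $\hatD=D$, $\hatA=SAS\inv$, $\hatB=SB$, and $C=\hatC S=CS$. From $CS=C$ we get $CS\inv=C$, hence $C\hatA^{k}=CSA^{k}S\inv=CA^{k}S\inv$; comparing with $C\hatA^{k}=CA^{k}$ for $k\le\ell-1$ gives $\Omega_{\ell-1}(I-S\inv)=0$, and since $(C,A)$ is observable $\Omega_{\ell-1}$ has full column rank, forcing $S=I$. Then $\hatA=A$, i.e.\ $\zeta\xi=0$, and $\zeta\neq0$ gives $\xi=0$, which already proves \ref{l:iso prop-nec-aux.1}. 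It remains to get \ref{l:iso prop-nec-aux.2}. With $\xi=0$ the recursion \eqref{e:choice ei} reads $E_{k}=\sum_{i=k+1}^{d}A^{\,i-k-1}\zeta\eta_{i}$, so $\hatB=B$ forces $E_{-1}=\sum_{i=0}^{d}A^{i}\zeta\eta_{i}=0$ and $\hatD=D$ forces $CE_{0}=0$; when $d=\ell$ the latter reduces (lower terms killed by $CA^{a}\zeta=0$) to $CA^{\ell-1}\zeta\,\eta_{\ell}=0$, hence $\eta_{\ell}=0$ because $CA^{\ell-1}\zeta\neq0$ (as $\Omega_{\ell-2}\zeta=0$ while $\Omega_{\ell-1}\zeta\neq0$). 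In all cases this leaves $\sum_{i=0}^{d'}A^{i}\zeta\eta_{i}=0$ with $d'=\min(d,\ell-1)$; multiplying by $\Omega_{\ell-1}$, the vectors $\Omega_{\ell-1}A^{i}\zeta$ for $i\in[0,d']$ have their first nonzero $p$-block in the distinct row-blocks $\ell-1-i$, each equal to $CA^{\ell-1}\zeta\neq0$, so reading the relation block by block from the top peels off $\eta_{d'}=\eta_{d'-1}=\cdots=\eta_{0}=0$. Hence $\eta_{i}=0$ for every $i\in[0,d]$, proving \ref{l:iso prop-nec-aux.2}.
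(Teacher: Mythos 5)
Your proposal is correct overall. For part (a) you take the same route as the paper: the candidate state $\hatx_t=x_t-\sum_{j=0}^{d-1}E_ju_{t+j}$ on $[0,T-d]$ (your induction step is exactly the cancellation $\zeta\bigl(\xi x_t+\sum_{i}\eta_iu_{t+i}\bigr)=0$ that the paper performs), the identities $C\hatA^k=CA^k$ for $k\in[0,\ell-1]$ and $CE_j=0$ for $j\in[1,d-1]$, and a separate check of the output equation on the tail indices $[T-d+1,T-1]$. That tail check is the one step you describe but do not execute (``I expect every coefficient \ldots to collapse to zero''); it does collapse, and the paper closes it with precisely the identities you anticipate, namely $C\hatA^iE_j=C\hatA^{i-j}E_0$ for $i\ge j$ and $C\hatA^iE_j=0$ for $i<j$ (see \eqref{e:rec based ind.1}--\eqref{e:rec based ind.2}), so this is an unexecuted but sound computation rather than a missing idea.

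Part (b) is where you genuinely diverge, to good effect. The paper never determines the isomorphism matrix: it equates Markov parameters to deduce $E_{-1}=0$, $E_0=0$ and $\eta_i=0$, and then equates transfer functions to obtain $\xi(sI-A)\inv B=0$, i.e.\ exactly the stated conclusion \ref{l:iso prop-nec-aux.1}. You instead observe that $\hatC=C$ and $C\hatA^k=CA^k$ for $k\in[0,\ell-1]$ force $\Omega_{\ell-1}S\inv=\Omega_{\ell-1}$ for any isomorphism $S$; since $\Omega_{\ell-1}$ has full column rank (observability together with $\ell=\ell(C,A)$), this gives $S=I$, hence $\zeta\xi=0$ and $\xi=0$. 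That is a strictly stronger conclusion than \ref{l:iso prop-nec-aux.1} and requires no controllability --- the paper only recovers $\xi=0$ later, in Lemma~\ref{l:iso prop-nec}, by additionally invoking controllability of $(A,B)$. Your derivation of \ref{l:iso prop-nec-aux.2} from $\sum_{i=0}^{d}A^i\zeta\eta_i=0$ via the block-triangular structure of $\Omega_{\ell-1}A^i\zeta$ (first nonzero block $CA^{\ell-1}\zeta\neq 0$ sitting in row-block $\ell-1-i$), with the boundary term $\eta_\ell$ handled through $CE_0=0$ when $d=\ell$, is also correct and is arguably more transparent than the paper's linear-independence argument for the vectors $\hatA^i\zeta$.
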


\begin{proof}
To prove \ref{l:from one to another.1}, we first show that there exists $\dhx{0}{T}\in\R^{n\times(T+1)}$ satisfying
\begin{align}
\dhx{1}{T}&=\hatA\dhx{0}{T-1}+\hatB\duf\label{e:hatx.1}\\
\dyf&=\hatC\dhx{0}{T-1}+\hatD\duf.\label{e:hatx.2}
\end{align}
We claim that $\dhx{0}{T}\in\R^{n\times(T+1)}$ defined by
\begin{align}
\dhx{0}{T-d}&:=\dx{0}{T-d}-\sum_{i=0}^{d-1}E_i\du{i}{T-d+i}\label{e:def z.1}\\
\hatx_{i+1}&:=\hatA\hatx_i+\hatB u_i\quad\text{for}\quad i\in[T-d,T-1] \label{e:def z.2}
\end{align}
satisfies \eqref{e:hatx.1} and \eqref{e:hatx.2}.

To prove this claim, let $k\in[0,T-d-1]$. Note that
$
\hatx_{k+1}\overset{\eqref{e:def z.1}}{=}x_{k+1}-\sum_{i=0}^{d-1}E_i u_{k+1+i}=Ax_k+Bu_k-\sum_{i=0}^{d-1}E_iu_{k+1+i}$
and 
$\hatA\hatx_k+\hatB u_k
\overset{\eqref{e:def z.1}}{=}\hatA x_k+\hatB u_k-\sum_{i=0}^{d-1}\hatA E_i u_{k+i}$. Using \eqref{e:choice hata} and \eqref{e:choice ei}, one can verify that the difference between these two expressions is
$\zeta\xi x_k+\sum_{i=0}^{d}\zeta\eta_i u_{k+i}$. Therefore, \eqref{e:xi etas in lker} implies that
$
\dhx{1}{T-d}=\hatA\dhx{0}{T-d-1}+\hatB\du{0}{T-d-1}.
$
Together with \eqref{e:def z.2}, this proves \eqref{e:hatx.1}. 

Therefore, it remains to prove \eqref{e:hatx.2}. First, we make a few crucial observations. To begin with, we have 
\beq\label{e:zeta at l-1 nonzero}
CA^{\ell-1}\zeta\neq 0
\eeq
since $(C,A)$ is observable and $\zeta\neq 0$. Also, it follows from \eqref{e:spec zeta} and \eqref{e:choice hata} that
\beq\label{e:from tildeA to A}
C\hatA^i=CA^i\text{ for }i\in[0,\ell-1]
\eeq
and $
C\hatA^{\ell}=CA^{\ell}+CA^{\ell-1}\zeta\xi.
$
Further, observe that
\beq\label{e:c hat a zeta is zero up to l-2}
C\hatA^i\zeta=0
\eeq
for $i\in[0,\ell-2]$ due to \eqref{e:spec zeta} and \eqref{e:from tildeA to A}. Finally, it follows from the recursion \eqref{e:choice ei} that
\beq\label{e:ei as sum}
E_{i}=\sum_{k=0}^{d-i-1}\hatA^k\zeta\eta_{i+k+1}
\eeq
for $i\in[-1,d]$ and from \eqref{e:c hat a zeta is zero up to l-2} that
\beq\label{e:cei}
CE_i=0\text{ for }i\in[1,d].
\eeq

To show \eqref{e:hatx.2}, we first deal with the case $d=0$. Since $d=\min(\ell,T-1)$ and $\ell\geq 1$, we see that $T=1$ in this case. Then, it follows from \eqref{e:def z.1} that $\dhx{0}{1}=\dx{0}{1}$ and from \eqref{e:choice hatd}-\eqref{e:choice ei} that $\hatD=D$. Since $\hatC=C$ due to \eqref{e:choice hatd}, we see that \eqref{e:hatx.2} is readily satisfied if $d=0$. 

Suppose now that $d\geq 1$. Note that
\begin{align}
\hatC&\dhx{0}{T-d}+\hatD\du{0}{T-d}\overset{\eqref{e:choice hatd}}{=}C\dhx{0}{T-d}+\hatD\du{0}{T-d}\notag\\
&\overset{\eqref{e:def z.1}\&\eqref{e:cei}}{=}
C\dx{0}{T-d}-CE_0\du{0}{T-d}+\hatD\du{0}{T-d}\notag\\
&\overset{\eqref{e:choice hatd}}{=}
C\dx{0}{T-d}+D\du{0}{T-d}=\dy{0}{T-d}\label{e:out tot T-k}
\end{align}
where the last equality follows from the fact that $x$ is a state for the explaining system $\sa$. Hence, we see that \eqref{e:hatx.2} is satisfied if $d=1$.

Suppose that $d\geq 2$. In view of \eqref{e:out tot T-k}, what remains to be proven is that
\beq
\dy{T-d+1}{T-1}=\hatC\dhx{T-d+1}{T-1}+\hatD\du{T-d+1}{T-1}.\label{e:after T-k}
\eeq
To do so, let $i\in[1,d-1]$. Define
$
\haty_{T-d+i}:=\hatC \hatx_{T-d+i} +\hatD u_{T-d+i}
$
and
$
\Delta_{T-d+i}:=\haty_{T-d+i}-y_{T-d+i}.
$
Note that
\begin{align*}
\hatx_{T-d+i}&=\hatA^i \hatx_{T-d}+\sum_{j=0}^{i-1}\hatA^{i-j-1}\hatB u_{T-d+j}\\
x_{T-d+i}&=A^i x_{T-d}+\sum_{j=0}^{i-1}A^{i-j-1}Bu_{T-d+j}\\
\hatx_{T-d}&=x_{T-d}-\sum_{j=0}^{d-1}E_ju_{T-d+j}
\end{align*}
where the first equality follows from \eqref{e:def z.2}, the second from the fact that $x$ is a state for the data, and the third from \eqref{e:def z.1}. By using \eqref{e:choice hata}, \eqref{e:choice ei}, \eqref{e:from tildeA to A}, \eqref{e:c hat a zeta is zero up to l-2}, and the fact that $d\leq\ell$, we see that
$
\Delta_{T-d+i}=\sum_{j=0}^{i}C\hatA^{i-j}E_0u_{T-d+j}-\sum_{j=0}^{d-1}C\hatA^iE_ju_{T-d+j}.
$
By using \eqref{e:c hat a zeta is zero up to l-2} and \eqref{e:ei as sum}, one can prove by induction that
\beq
C\hatA^iE_j=C\hatA^{i-j}E_0\label{e:rec based ind.1}
\eeq
for all $j\in[0,d-1]$ and $i\in[j,d-1]$ as well as that
\beq
C\hatA^iE_j=0\label{e:rec based ind.2}
\eeq
for all $j\in[1,d-1]$ and $i\in[0,j-1]$. It follows from \eqref{e:rec based ind.1} that
$
\sum_{j=0}^{i}C\hatA^iE_ju_{T-d+j}=
\sum_{j=0}^{i}C\hatA^{i-j}E_0u_{T-d+j}.
$
Hence, we have
$
\Delta_{T-d+i}=-\sum_{j=i+1}^{d-1}C\hatA^iE_ju_{T-d+j}\overset{\eqref{e:rec based ind.2}}=0.
$
This proves \eqref{e:after T-k} and hence \eqref{e:def z.2} in view of \eqref{e:out tot T-k}. Therefore, we proved that $\sah\in\calE(n)$. Further, it follows from \eqref{e:from tildeA to A} and observability of $(C,A)$ that $(\hatC,\hatA)$ is also observable and $\ell(\hatC,\hatA)=\ell$. Then, we have $\sah\in\calE(\ell,n)\cap\calO$ which proves \ref{l:from one to another.1}.

To prove \ref{l:from one to another.2}, note that
\beq
CE_0=0\qand
CA^iB=\hatC\hatA^i\hatB\quad\text{for all}\quad i\geq 0\label{e:ec0}
\eeq
since the two systems are isomorphic. The latter, together with \eqref{e:from tildeA to A}, implies that
$
CA^iE_{-1}=0
$
for all $i\in[0,\ell-1]$. As $(C,A)$ is observable and $\ell=\ell(C,A)$, we see that $E_{-1}=0$. Since $E_{-1}=\hatA E_0+\zeta\eta_0$, \eqref{e:zeta at l-1 nonzero} and \eqref{e:from tildeA to A} imply that
\beq\label{e:caie0}
0=CA^iE_{-1}=CA^i(\hatA E_0+\zeta\eta_0)=CA^{i+1}E_0
\eeq
for all $i\in[0,\ell-2]$. As $(C,A)$ is observable and $\ell=\ell(C,A)$, \eqref{e:ec0} and \eqref{e:caie0} imply that $E_0=0$. Therefore, we have $\zeta\eta_0=E_{-1}-\hatA E_0=0$. Since $\zeta\neq 0$, this yields $\eta_0=0$. Note that
$
E_0=\sum_{k=0}^{\ell-1}\hatA^k\zeta\eta_{k+1}=0
$
due to \eqref{e:ei as sum}. From \eqref{e:zeta at l-1 nonzero} and \eqref{e:from tildeA to A}, we have $C\hatA^{\ell-1}\zeta\neq 0$. As such, the vectors $\hatA^i\zeta$ with $i\in[0,\ell-1]$ are linearly independent. Then, it follows from $\sum_{k=0}^{\ell-1}\hatA^k\zeta\eta_{k+1}=0$ that $\eta_i=0$ for every $i\in[1,\ell-1]$. Thus, we have proven \ref{l:iso prop-nec-aux.2}.

To prove \ref{l:iso prop-nec-aux.1}, note first that $\hatB=B$ as $E_{-1}=0$. Then, we have 
$
0=C(sI-\hatA)\inv B-C(sI-A)\inv B
=C(sI-\hatA)\inv \zeta\xi (sI-A)\inv B
$
where the first equality follows from isomorphism, the second is evident. Since $C(sI-\hatA)\inv \zeta$ is a nonzero column vector, we see that $\xi (sI-A)\inv B=0$. This proves \ref{l:iso prop-nec-aux.1}.
\end{proof}

\begin{example}
We illustrate Lemma~\ref{l:from one to another} with the data in Section~\ref{sec:running example}. Consider the explaining system \eqref{e:first exp} for  $(u_{[0,4]},y_{[0,4]})$. Note that \eqref{e:xi etas in lker} is satisfied with $\xi=\begin{bmatrix}
-1&-1
\end{bmatrix}$, $ \eta_0=\begin{bmatrix}
0 & 1
\end{bmatrix} $, and $ \eta_1=\begin{bmatrix}
1 & 0
\end{bmatrix} $. Since $\ell=1$ for  \eqref{e:first exp}, $\zeta=\col(1,1)$ satisfies \eqref{e:spec zeta}. Applying Lemma~\ref{l:from one to another}, we obtain the explaining system
\[
\left[\begin{array}{c|c} \hatA_1 & \hatB_1 \\\hline  \hatC_1 & \hatD_1\end{array}\right]=
\left[\!\begin{array}{rr|rr}
\!-2\!& \!-1\! &\! -2\!&\! 2\!\\
\!-1\!&\! -1\! &\! -2\!&\! 2\!\\
\hline
\! 1\!& \!0\! &\! 3\!&\! 0\!\\
\! 0\!& \!1\! &\! 1\!&\! 0\!
\end{array}\!\right]\; ,
\]
with state $x_{[0,5]}=
\left[\!\begin{array}{rrrrrr}
\!-1\!& \!0\! &\! -1\!&\! 1\!&\!0\!&\!1\!\\
\!0\!& \!-1\! &\! -1\!&\! 0\!&\!1\!&\! 1\!
\end{array}\!\right]$. \hfill \qed\end{example}
Using Lemma~\ref{l:from one to another}, we  state a necessary condition for the isomorphism property. 
\begin{lemma}\label{l:iso prop-nec}
Suppose that $n\geq\ell\geq 0$ and $\sa\in\syl{\ell}{n}\cap\calM$. Let $\dxf\in\R^{n\times(T+1)}$ be a state for $\sa$. If 
$\syl{\ell}{n}\cap\calM$ has the isomorphism property, then $T\geq\ell+(\ell+1)m+n$ and $J_{\ell}(x)$ has full row rank.
\end{lemma}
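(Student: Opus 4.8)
The cleanest approach is to prove the single statement that $J_{\ell}(x)$ has full row rank; since $J_{\ell}(x)$ is a matrix with $n+(\ell+1)m$ rows and $T-\ell$ columns, full row rank immediately forces $T\geq\ell+(\ell+1)m+n$ (in particular $T\geq\ell+1$, so $J_{\ell}(x)$ is well defined). I would first dispose of $\ell=0$: observability forces $n=0$, the system is memoryless, $\syl{\ell}{n}\cap\calM$ is the set of all $D'\in\Rpm$ with $D'\duf=\dyf$, and the isomorphism property means such a $D'$ is unique; uniqueness is exactly the statement that $\duf=J_{0}(x)$ has full row rank, which in turn forces $T\geq m$.

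So assume $\ell\geq 1$, hence $n\geq\ell\geq 1$, and suppose for contradiction that $J_{\ell}(x)$ fails to have full row rank, or that $T\leq\ell$. Put $d:=\min(\ell,T-1)$. If $T\geq\ell+1$ then $d=\ell$ and $J_{d}(x)$ is not full row rank by assumption; if $T\leq\ell$ then $J_{d}(x)=J_{T-1}(x)$ is a single column with $n+Tm\geq 2$ entries and hence not full row rank either. In both cases $\lk J_{d}(x)\neq\{0\}$, so there is a nonzero row vector $(\xi,\eta_{0},\ldots,\eta_{d})$, $\xi\in\R^{1\times n}$, $\eta_{i}\in\R^{1\times m}$, annihilating $J_{d}(x)$: this is precisely \eqref{e:xi etas in lker}.

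The plan is to feed $(\xi,\eta_{i})$ into Lemma~\ref{l:from one to another}. For the auxiliary vector $\zeta$: by Lemma~\ref{p: sum rk}, observability of $(C,A)$ gives $\rank\Omega_{\ell-2}=n-\rho_{\ell-1}(C,A)\leq n-1<n$, so $\ker\Omega_{\ell-2}\neq\{0\}$, and any nonzero $\zeta$ there satisfies \eqref{e:spec zeta} (for $\ell=1$ that requirement is vacuous). The subtle point is that Lemma~\ref{l:from one to another}\ref{l:from one to another.1} only yields $\sah\in\syl{\ell}{n}\cap\calO$, whereas I need $\sah\in\calM$ to invoke the isomorphism property of $\syl{\ell}{n}\cap\calM$. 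Here I would use a perturbation argument: with $\xi,\eta$ fixed, the formulas \eqref{e:choice hata}--\eqref{e:choice ei} make $(\hatA,\hatB)$ a polynomial (hence continuous) function of $\zeta\in\R^{n}$, and at $\zeta=0$ every $E_{i}$ vanishes, so $(\hatA,\hatB)$ reduces to $(A,B)$, which is controllable; since controllability of a pair is an open condition, $(\hatA,\hatB)$ remains controllable for all $\zeta$ in a neighborhood of $0$. As $\ker\Omega_{\ell-2}$ is a nonzero subspace it contains nonzero vectors of arbitrarily small norm, so I may choose $\zeta\in\ker\Omega_{\ell-2}\setminus\{0\}$ small enough that $(\hatA,\hatB)$ is controllable; for such $\zeta$, Lemma~\ref{l:from one to another}\ref{l:from one to another.1} gives $\sah\in\syl{\ell}{n}\cap\calO$, which then lies in $\syl{\ell}{n}\cap\calM$.

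By the isomorphism property, $\sah$ and $\sa$ are isomorphic, so Lemma~\ref{l:from one to another}\ref{l:from one to another.2} yields $\eta_{i}=0$ for all $i\in[0,d]$ and $\xi A^{i}B=0$ for all $i\in[0,n-1]$, i.e. $\xi\Gamma_{n-1}=0$; controllability of $(A,B)$ then forces $\xi=0$, so $(\xi,\eta_{0},\ldots,\eta_{d})$ is the zero vector, contradicting its choice. Hence $J_{\ell}(x)$ has full row rank, $T\geq\ell+1$, and comparing rows with columns gives $T\geq\ell+(\ell+1)m+n$. I expect the main obstacle to be exactly the controllability of the constructed $\sah$: Lemma~\ref{l:from one to another} is only guaranteed to preserve observability, $\syl{\ell}{n}\cap\calO$ may be strictly larger than $\syl{\ell}{n}\cap\calM$, and the small-$\zeta$ trick is what lets one stay inside $\calM$; the remaining case bookkeeping ($\ell=0$, $T\leq\ell$, and the existence of $\zeta$) is routine.
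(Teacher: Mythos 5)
Your proposal is correct and follows essentially the same route as the paper's proof: dispose of $\ell=0$ via the memoryless case, take a left-annihilator $(\xi,\eta_0,\dots,\eta_d)$ of $J_d(x)$ with $d=\min(\ell,T-1)$, feed it into Lemma~\ref{l:from one to another} with a sufficiently small $\zeta\in\ker\Omega_{\ell-2}\setminus\{0\}$ so that controllability (an open condition) is preserved, and then use the isomorphism property together with part~\ref{l:from one to another.2} and controllability of $(A,B)$ to force $\xi=0$ and $\eta_i=0$. The paper phrases this directly rather than by contradiction (it shows $J_d(x)$ has full row rank and then rules out $d=T-1$ by counting rows and columns of $J_{T-1}(x)$), and it writes the small perturbation as $\zeta=\varepsilon\zeta_0$, but these are cosmetic differences.
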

\begin{proof}
Suppose first that $\ell=n=0$. Since $\syl{0}{0}\cap\calM$ has the isomorphism property, $\duf=J_{0}(x)$ must have full row rank and hence $T\geq m$. 

Now, suppose that $n\geq\ell\geq 1$. 
Let $d=\min(\ell,T-1)$, $\xi\in\R^{1\times n}$ and $\eta_i\in\R^{1\times m}$ with $i\in[0,d]$ be vectors such that $
\begin{bmatrix}
\xi&\eta_0&\cdots&\eta_d
\end{bmatrix}\in\lk J_d(x)$. Also, let $\zeta_0$ be a nonzero vector be such that $CA^i\zeta_0=0$ for $i\in[0,\ell-2]$. For $\varepsilon>0$, let $\sahe$ denote the explaining system obtained from Lemma~\ref{l:from one to another} by taking $\zeta=\varepsilon\zeta_0$. Since $(A,B)$ is controllable, so is $(\hatA_\varepsilon,\hatB_\varepsilon)$ for all sufficiently small $\varepsilon$. Hence, we see that $\sahe\in\calE(\ell,n)\cap\calM$ for some $\epsilon>0$. Since $\calE(\ell,n)\cap\calM$ has the isomorphism property and $(A,B)$ is controllable, Lemma~\ref{l:from one to another}.\ref{l:from one to another.2} implies that $\xi=0$ and $\eta_i=0$ for all $i\in[0,d]$. This means that $J_{d}(x)$ has full row rank. Since $T\geq1$, $m\geq 1$, and $n\geq 1$, $J_{T-1}(x)$ has at least 2 rows and exactly 1 column. As such, it cannot have full row rank. Then, we see that $d=\min(\ell,T-1)\neq T-1$. Therefore, $T-1>\ell$, $d=\ell$, and $J_{\ell}(x)$ has full row rank. This implies that $T\geq\ell+(\ell+1)m+n$.\end{proof}

We now show how to construct an explaining system from a given one by increasing the state dimension and  the lag.
\blem\label{l:lag extension}
Suppose that $n\geq 1$, $A\in\Rnn$, and $C\in\Rpn$ are such that $(C,A)$ is observable. Denote $\ell=\ell(C,A)$. Let $\zeta\in\R^n$ be such that 
\beq\label{e:C A zeta relation}
CA^i\zeta=0\quad\forall\,i\in[0,\ell-2]\quad\text{and}\quad CA^{\ell-1}\zeta\neq 0.
\eeq
Also, let $n'\geq 1$, $A'\in\R^{n'\times n'}$ and $C'\in\R^{1\times n'}$ be such that $(C',A')$ is observable. Then, the pair
$$
(\barC,\barA):=\left(\bbm
C & 0_{n,n'}
\ebm,
\bbm
A &\zeta C'\\
0 &A'
\ebm
\right)
$$
is observable and $\bar\ell:=\ell(\barC,\barA)=\ell+n'$. Moreover, if $\zeta'\in\R^{n'}$ satisfies
\beq\label{e:prime zeta}
C'(A')^i\zeta'=0\,\,\forall\,i\in[0,n'-2]\,\,\text{and}\,\, C'(A')^{n'-1}\zeta'\neq 0,
\eeq
then
\beq\label{e:bar zeta}
\barC\barA^i\bbm0\\\zeta'\ebm=0\,\,\forall\,i\in[0,\bar\ell-2]\,\,\text{and}\,\, \barC\barA^{\bar\ell-1}\bbm0\\\zeta'\ebm\neq 0.
\eeq
\elem

\begin{proof}
By direct inspection, we see that
$\barC\barA^k=\bbm CA^k & \Xi_k\ebm$
where $\Xi_0=0$, $\Xi_{k+1}=CA^k\zeta C'+\Xi_kA'$ for all $k\geq 0$. By using \eqref{e:C A zeta relation}, we further see that
\ben[label=(\roman*),ref=(\roman*)]
\item\label{i:up to l-1} $\Xi_{k}=0$ for all $k\in[0,\ell-1]$, and
\item\label{i:up at l} $\Xi_{\ell}=CA^{\ell-1}\zeta C'$.
\een
Let $\bar\Omega_{k}$, $\Omega_{k}$, and $\Omega'_{k}$ be the $k$-th observability matrices of the pairs $(\barC,\barA)$, $(C,A)$, and $(C',A')$, respectively. We claim that
\beq\label{e:ranks of obs matrices}
\rank \bar\Omega_{\ell+i}=n+\rank\Omega'_{i}
\eeq
for all $i\geq 0$. To show this, let $i\geq 0$. Note that $\bar\Omega_{\ell+i}$ is of the form
\beq\label{e:spec form}
\bar\Omega_{\ell+i}=\begin{bsmallmatrix} \Omega_{\ell-1}& 0\\
* & \Xi_{\ell}\\
* & \Xi_{\ell+1}\\
\vdots & \vdots\\
* & \Xi_{\ell+i}
\end{bsmallmatrix}.
\eeq
From \ref{i:up at l} and \eqref{e:C A zeta relation},
it follows that
\beq\label{e:from Xi to Omega prime}
\ker\begin{bsmallmatrix} 
\Xi_{\ell}\\
\Xi_{\ell+1}\\
 \vdots\\
\Xi_{\ell+i}
\end{bsmallmatrix}=\ker\Omega'_{i} \text{ and }\rank\begin{bsmallmatrix} 
\Xi_{\ell}\\
\Xi_{\ell+1}\\
 \vdots\\
\Xi_{\ell+i}
\end{bsmallmatrix}=\rank\Omega'_{i}.
\eeq
Since $(C,A)$ is observable and $\ell(C,A)=\ell$, $\rank\Omega_{\ell-1}=n$. Therefore, we see from \eqref{e:spec form} that \eqref{e:ranks of obs matrices} holds. Since $C'\in\R^{1\times n'}$ and $(C',A')$ is observable, we have $\ell(C',A')=n'$. Then, \eqref{e:ranks of obs matrices} implies that $\rank\bar\Omega_{\ell+n'-1}=n+n'$ and hence that $(\barC,\barA)$ is observable. It also follows from \eqref{e:ranks of obs matrices} that $\rank\bar\Omega_{\ell+n'-2}<n+n'$. This means that  $\ell(\barC,\barA)=\ell+n'$. Further, if $\zeta'$ satisfies \eqref{e:prime zeta} then $\zeta'\in\ker\Omega'_{n'-2}$ and $\zeta'\not\in\ker\Omega'_{n'-1}$. From \eqref{e:spec form} and \eqref{e:from Xi to Omega prime}, it follows that $\bbm0\\\zeta'\ebm\in\ker\bar\Omega_{\ell+n'-2}$ and $\bbm0\\\zeta'\ebm\not\in\ker\bar\Omega_{\ell+n'-1}$. Hence, \eqref{e:bar zeta} holds.\end{proof}

\begin{lemma}\label{l:extending towards la}
Suppose that $n\geq\ell\geq 0$ and $\sa\in\syl{\ell}{n}\cap\calM$. Let $\dxf\in\R^{n\times(T+1)}$ be a state for $\sa$. If 
$\mu\geq 1$ and $\syl{\ell+\mu}{n+\mu}\cap\calM=\emptyset$, then $T\geq\ell+\mu+(\ell+\mu+1)m+n$ and $J_{\ell+\mu}(x)$ has full row rank.
\end{lemma}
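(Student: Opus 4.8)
The plan is to prove the rank assertion in contrapositive form and to read off the bound on $T$ for free. Since $J_{\ell+\mu}(x)$ has $n+(\ell+\mu+1)m$ rows and $T-(\ell+\mu)$ columns, full row rank of $J_{\ell+\mu}(x)$ already forces $T-(\ell+\mu)\geq n+(\ell+\mu+1)m$, i.e. $T\geq\ell+\mu+(\ell+\mu+1)m+n$. Hence it suffices to prove that \emph{if $J_{\ell+\mu}(x)$ does not have full row rank, then $\syl{\ell+\mu}{n+\mu}\cap\calM\neq\emptyset$} (this also covers the ``$T$ too small'' regime, since $T<\ell+\mu+(\ell+\mu+1)m+n$ already makes $J_{\ell+\mu}(x)$ have more rows than columns). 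So from now on assume $J_{\ell+\mu}(x)$ is not of full row rank; I will exhibit a minimal explaining system of lag $\ell+\mu$ with $n+\mu$ states, contradicting the hypothesis.

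\emph{Step 1 (an observable extension of $\sa$).} Assume $n\geq 1$ (the memoryless case $n=\ell=0$ is handled by writing down directly a minimal system of lag $\mu$ with $\mu$ states that matches the short data). Pick $\zeta\in\R^n$ as in \eqref{e:C A zeta relation}, and pick any observable single-output pair $(C',A')$ with $C'\in\R^{1\times\mu}$ and $A'\in\R^{\mu\times\mu}$ — so that $\ell(C',A')=\mu$ — chosen moreover so that the vector $\zeta'$ of \eqref{e:prime zeta} makes $(A',\zeta')$ controllable and $A'$ has eigenvalues avoiding a certain finite set (see the obstacle below). Lemma~\ref{l:lag extension} then gives an observable pair $(\barC,\barA):=\bigl(\bbm C & 0\ebm,\ \left[\begin{smallmatrix}A&\zeta C'\\0&A'\end{smallmatrix}\right]\bigr)$ with $\ell(\barC,\barA)=\ell+\mu$, and $\bar\zeta:=\col(0_n,\zeta')$ satisfies \eqref{e:bar zeta}. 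Set $\barB:=\col(B,0_{\mu,m})$, $\barD:=D$, and $\bar x_{[0,T]}:=\col(x_{[0,T]},0_{\mu,T+1})$; substituting into \eqref{e:explains} and using that $x$ is a state for $\sa$ shows at once that $\bar x$ is a state for $(\barA,\barB,\barC,\barD)$, which therefore lies in $\calE(\ell+\mu,n+\mu)\cap\calO$. It fails to be controllable (the $\zeta'$-block is unreachable), and the remaining task is to perturb it into $\calM$ while staying in $\calE(\ell+\mu,n+\mu)\cap\calO$.

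\emph{Step 2 (perturbation to restore controllability).} Put $\bar d:=\min(\ell+\mu,T-1)$. Because $\bar x_{[0,T-\bar d-1]}=\col(x_{[0,T-\bar d-1]},0)$, the Hankel structure gives $\lk J_{\bar d}(\bar x)=\{\bbm\alpha&\beta&\gamma\ebm:\ \bbm\alpha&\gamma\ebm\in\lk J_{\bar d}(x),\ \beta\in\R^{1\times\mu}\}$. Since $J_{\ell+\mu}(x)$ is not of full row rank, one checks (separately for $\bar d=\ell+\mu$ — the hypothesis — and for $\bar d=T-1<\ell+\mu$, where $J_{T-1}(x)$ has a single column) that $\lk J_{\bar d}(x)\neq\{0\}$. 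Fix a nonzero $\bbm\alpha&\gamma\ebm\in\lk J_{\bar d}(x)$, write $\gamma=(\eta_0,\dots,\eta_{\bar d})$, and apply Lemma~\ref{l:from one to another} to the system $(\barA,\barB,\barC,\barD)$, its state $\bar x$, the annihilator $(\bbm\alpha&\beta\ebm,\eta_0,\dots,\eta_{\bar d})\in\lk J_{\bar d}(\bar x)$ (with $\beta$ a parameter still at our disposal), the special vector $\varepsilon\bar\zeta$, and any $\varepsilon\neq 0$. By Lemma~\ref{l:from one to another}.\ref{l:from one to another.1} this returns a system in $\calE(\ell+\mu,n+\mu)\cap\calO$ whose state matrix, by \eqref{e:choice hata}, is $\hat A_\varepsilon=\barA+\varepsilon\bar\zeta\bbm\alpha&\beta\ebm=\left[\begin{smallmatrix}A&\zeta C'\\ \varepsilon\zeta'\alpha& A'+\varepsilon\zeta'\beta\end{smallmatrix}\right]$ and whose input matrix is $\hat B_\varepsilon=\barB+E_{-1}$ with $E_{-1}=\varepsilon\sum_{k=0}^{\bar d}\hat A_\varepsilon^{k}\bar\zeta\eta_k$ by \eqref{e:choice ei}, so that the lower block of $\hat B_\varepsilon$ equals $\varepsilon\sum_k(A')^k\zeta'\eta_k+O(\varepsilon^2)$.

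The claim is that for a suitable $\beta$ and $(C',A')$ and all small $\varepsilon\neq 0$ the pair $(\hat A_\varepsilon,\hat B_\varepsilon)$ is controllable, which places the constructed system in $\syl{\ell+\mu}{n+\mu}\cap\calM$ — the desired contradiction. I would prove this by a Hautus argument along a sequence $\varepsilon_k\downarrow 0$: a left eigenvector $\bbm\phi&\psi\ebm$ of $\hat A_{\varepsilon_k}$ annihilating $\hat B_{\varepsilon_k}$ satisfies $\phi(A-\lambda_k I)+\varepsilon_k(\psi\zeta')\alpha=0$ and $\phi(B+E_{-1}^x)+\psi E_{-1}^z=0$; passing to the limit, controllability of $(A,B)$ kills the limiting $\phi$ and forces $\lambda_*$ to be an eigenvalue of $A'$, after which the first-order terms (divided by $\varepsilon_k$) produce the single relation $(\psi_*\zeta')\,\alpha(A-\lambda_* I)^{-1}B=\psi_*\sum_j(A')^j\zeta'\eta_j$; since $\alpha\neq 0$ or $\gamma\neq 0$, and $\alpha(sI-A)^{-1}B\not\equiv 0$ by controllability, this relation is broken by choosing the eigenvalues of $A'$ (and, in the $\gamma\neq 0$ case, $\beta$) generically — exactly the finite avoidance set mentioned in Step 1. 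This contradicts uncontrollability, so $J_{\ell+\mu}(x)$ has full row rank, whence also $T\geq\ell+\mu+(\ell+\mu+1)m+n$.

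\emph{Main obstacle.} The heart of the proof is the controllability verification in Step 2: one must track precisely which blocks of $\hat A_\varepsilon$ and $\hat B_\varepsilon$ the perturbation activates (the $x\!\to\!z$ coupling $\varepsilon\zeta'\alpha$, the $z$-dynamics shift $\varepsilon\zeta'\beta$, the new $z$-input $\varepsilon\sum_k(A')^k\zeta'\eta_k$), determine at which power of $A$ or $A'$ the relevant obstruction vanishes, and verify that the finitely many bad choices of $(C',A')$, $\beta$, $\varepsilon$ can be avoided — essentially the same bookkeeping that underlies Lemma~\ref{l:from one to another}, now combined with the explicit shape of $\lk J_{\bar d}(\bar x)$ supplied by the zero-padding. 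The side cases ($n=\ell=0$, and a couple of small-$T$ degeneracies in which $J_{\bar d}(x)$ already has full row rank) are each dispatched by directly writing a minimal lag-$(\ell+\mu)$, $(n+\mu)$-state system reproducing the data.
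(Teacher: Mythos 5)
Your overall architecture is the paper's: pad the state with zeros, extend $(C,A)$ to an observable pair of lag $\ell+\mu$ via Lemma~\ref{l:lag extension}, feed a left annihilator of $J_{\bar d}(x)$ into Lemma~\ref{l:from one to another}, and use the Hautus test at the new eigenvalue to turn the (un)controllability of the resulting systems into a scalar relation involving $\alpha(\lambda I-A)^{-1}B+\sum_j\lambda^j\eta_j$. The contrapositive packaging and the placement of $\varepsilon$ (you scale the rank-one correction $\bar\zeta[\alpha\;\beta]$ and keep the coupling $\zeta C'$; the paper scales the coupling $\eps\zeta C'$ and keeps the correction $\bbm 0\\\zeta'\ebm[\xi\;0]$) are cosmetic differences. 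However, the proof as written has a genuine gap exactly where you flag your ``main obstacle'': the claim that the limiting relation ``is broken by choosing the eigenvalues of $A'$ (and $\beta$) generically'' is asserted, not proved, and it is the entire content of the lemma. To make it work you must show that the function $\lambda\mapsto\alpha(\lambda I-A)^{-1}B+\sum_{j=0}^{\bar d}\lambda^j\eta_j$ is not identically zero whenever $[\alpha\;\eta_0\;\cdots\;\eta_{\bar d}]\neq 0$; this is where controllability of $(A,B)$ enters (it forces $\alpha(\lambda I-A)^{-1}B\not\equiv 0$ when $\alpha\neq 0$) together with the observation that a strictly proper rational function cannot cancel a nonzero polynomial. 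The paper gets this for free by letting $\lambda$ range over \emph{all} reals outside $\sigma(A)$: the identity $\xi(\lambda I-A)^{-1}B+\sum_k\lambda^k\eta_k=0$ for all such $\lambda$ forces both pieces to vanish separately, hence $\eta_k=0$ and $\xi=0$. Your single-$\lambda$, eigenvector-perturbation route can be completed (the first-order expansion $\phi_k=\varepsilon_k\psi_k\zeta'\alpha(\sigma_kI-A)^{-1}$ you need is exact, not asymptotic, since $\sigma_k\notin\sigma(A)$ for large $k$), but you must also pin down $\psi_*\zeta'\neq 0$, which requires committing to a concrete $(C',A')$ such as the Jordan block with $\zeta'=e_\mu$ rather than an unspecified ``generic'' observable pair; the parameter $\beta$ plays no role in the limiting relation and its invocation suggests the computation was not carried through.

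Two further points are dismissed but are not free. First, the base case $n=\ell=0$ is not ``writing down directly a minimal system'': the paper needs the same Lemma~\ref{l:from one to another} construction there, with the Vandermonde-type argument $\sum_k\lambda^k\eta_k=0$ for all $\lambda$ to kill the annihilator of $H_{\bar d}(\duf)$; a memoryless explaining relation $\dyf=D\duf$ must be promoted to a \emph{controllable and observable} $(\mu,\mu)$-system reproducing the data, and that promotion is exactly the nontrivial step. Second, when you pass from ``$J_{\bar d}(x)$ has full row rank'' to ``$J_{\ell+\mu}(x)$ has full row rank and $T\geq\ell+\mu+(\ell+\mu+1)m+n$'' you still need the paper's closing observation that $J_{T-1}(x)$ has at least two rows and exactly one column (for $n\geq 1$), so $\bar d=T-1$ is impossible and hence $\bar d=\ell+\mu$; your parenthetical treats this as a degeneracy to be ``dispatched by directly writing a system,'' but it is the step that converts the rank statement about $J_{\bar d}(x)$ into the two asserted conclusions.
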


\begin{proof} Let $\lambda\in\R$, $A'_\lambda\in\R^{\mu\times \mu}$ be the Jordan block with the eigenvalue $\lambda$, $C':=e_1^T$, and $\zeta':=e_{\mu}$ where $e_i$ denotes the $i$th standard basis vector of $\R^\mu$. Clearly, $(C',A'_\lambda)$ is observable and $\ell(C',A'_\lambda)=\mu$. In addition, we have that
$
C'(A'_\lambda)^i\zeta'=0\quad\forall\,i\in[0,\mu-2]$ and $ C'(A'_\lambda)^{\mu-1}\zeta'=1.
$

We claim that $J_d(x)$ has full row rank where $d=\min(\ell+\mu,T-1)$. To prove it  we  consider the cases $n=0$ and $n\geq 1$.

For the case $n=0$, we have that $\ell=0$, $x$ is a void matrix, and $\dyf=D\duf$ for some $D\in\R^{p\times m}$. Therefore, for every nonzero $\theta\in\R^p$, $\dzf:=0_{\mu\times(T+1)}$ is a state for $\sahnz\in\calE(\mu,\mu)\cap\calO$. Let $\eta_i$ with $i\in[0,d]$ be such that $\begin{bmatrix}
\eta_0&\cdots&\eta_d
\end{bmatrix}\in\lk J_d(x)$. Clearly, we have $\begin{bmatrix}
0_{1,\mu}&\eta_0&\cdots&\eta_d
\end{bmatrix}\in\lk J_d(z)$. Define
\begin{alignat*}{3}
\hatA_{\lambda}&=A_{\lambda}' & \qquad\hatB_{\lambda}&=E_{-1}\\
\hatC&=\theta C' &
\hatD_{\lambda}&=D+\theta C'E_0
\end{alignat*}
where 
$
E_d=0\,\,\,\text{and}\,\,\, E_{i-1}=\hatA_{\lambda} E_{i}+
\zeta'\eta_{i}\,\,\,\text{for}\,\,\,i\in[0,d] .
$
Since $d=\min(\mu,T-1)$ for this case, it follows from Lemma~\ref{l:from one to another}.\ref{l:from one to another.1} that $\sahnzz\in\calE(\mu,\mu)\cap\calO$. Since $\calE(\mu,\mu)\cap\calM=\emptyset$ due to the hypothesis, $(\hatA_{\lambda},\hatB_{\lambda})$ is uncontrollable. From the fact that $\hatA_{\lambda}=A'_{\lambda}$ is a Jordan block, we see that $(\zeta')^TA'_{\lambda}=\lambda (\zeta')^T$ and $(\zeta')^TE_{-1}=0$. Since $E_{-1}=\sum_{k=0}^{d}(A'_{\lambda})^k\zeta'\eta_{k}$ due to \eqref{e:ei as sum}, we see that $\sum_{k=0}^{d}\lambda^k\eta_k=0$. As $\lambda$ is an arbitrary real number, we conclude that $\eta_i=0$ for every $i\in[0,d]$ and hence $J_d(x)$ has full row rank.

For the case $n\geq 1$, let $\zeta\in\R^n$ be as in \eqref{e:C A zeta relation} and define
$
\barC:=\bbm
C & 0_{n,\mu}
\ebm$, $\barA_{\eps,\lambda}:=
\bbm
A &\eps\zeta C'\\
0 &A'_\lambda
\ebm$, $\barB:=\begin{bmatrix}
B\\0_{\mu,m}
\end{bmatrix}
$
for $\eps>0$. Then, it follows from Lemma~\ref{l:lag extension} that $(\barC,\barA_{\eps,\lambda})$ is observable, $\ell(\barC,\barA_{\eps,\lambda})=\ell+\mu=:\bar\ell$, 
\beq\label{e:eps lambda lag extension}
\barC\barA_{\eps,\lambda}^i\bbm0\\\zeta'\ebm=0\,\,\forall\,i\in[0,\bar\ell-2]\,\,\text{and}\,\, \barC\barA_{\eps,\lambda}^{\bar\ell-1}\bbm0\\\zeta'\ebm\neq 0.
\eeq
Note that $\sab\in\syl{\ell+\mu}{n +\mu}\cap\calO$ and $\dzf:=\begin{bsmallmatrix}
\dxf\\0_{\mu\times(T+2)}\end{bsmallmatrix}$ is a state for $\sab$. Let $d=\min(\ell+\mu,T-1)$. Also, let $\xi\in\R^{1\times n}$, $\eta_i$ with $i\in[0,d]$ be such that $\begin{bmatrix}
\xi&\eta_0&\cdots&\eta_d
\end{bmatrix}\in\lk J_d(x)$. Clearly, we have $\begin{bmatrix}
\xi&0_{1,\mu}&\eta_0&\cdots&\eta_d
\end{bmatrix}\in\lk J_d(z)$. Define
\begin{alignat*}{3}
\hatA_{\eps,\lambda}&=\barA_{\eps,\lambda}+\bbm 0\\\zeta'\ebm\bbm\xi&0\ebm & \qquad\hatB_{\eps,\lambda}&=\barB+E_{-1}\\
\hatC&=\barC &
\hatD_{\eps,\lambda}&=D+\barC E_0
\end{alignat*}
where 
$
E_d=0\,\,\,\text{and}\,\,\, E_{i-1}=\hatA_{\eps,\lambda} E_{i}+
\bbm 0\\\zeta'\ebm\eta_{i}\,\,\,\text{for}\,\,\,i\in[0,d] .
$
Then, it follows from Lemma~\ref{l:from one to another}.\ref{l:from one to another.1} that $\sahb\in\syl{\ell+\mu}{n +\mu}\cap\calO$. From the hypothesis, we know that $(\hatA_{\eps,\lambda},\hatB_{\eps,\lambda})$ is uncontrollable. By taking the limit as $\eps$ tends to zero, we conclude that $(\hatA_{0,\lambda},\hatB_{0,\lambda})$ is uncontrollable as well. Note that
\[
\hatA_{0,\lambda}=\begin{bmatrix}
A & 0\\
\zeta'\xi & A'_{\lambda}
\end{bmatrix}\qand 
\hatB_{0,\lambda}=
\begin{bmatrix}
B\\0_{\mu,m}
\end{bmatrix}+F_{-1}
\]
where $F_d=0$ and $F_{i-1}=\hatA_{0,\lambda}F_i+\bbm 0\\\zeta'\ebm\eta_i$ for $i\in[0,d]$. Suppose that $\lambda$ is not an eigenvalue of $A$. Then, every left eigenvector of $\hatA_{0,\lambda}$ corresponding to an eigenvalue of $A$ must be of the form 
$
\begin{bmatrix}
v&0
\end{bmatrix}
$
where $v\in\C^{1\times n}$. From 
$
(\zeta')^TA_\lambda=\lambda(\zeta')^T
$, 
we see that left eigenvectors of
$
\hatA_{0,\lambda}
$
corresponding to the eigenvalue $\lambda$ are nonzero multiples of 
$
\begin{bmatrix}
\xi(\lambda I-A)\inv&(\zeta')^T
\end{bmatrix}
$. Since $(A,B)$ is controllable but
$
(\hatA_{0,\lambda},\hatB_{0,\lambda})
$
is uncontrollable, it follows from the Hautus test that
$
\begin{bmatrix}
\xi(\lambda I-A)\inv&(\zeta')^T
\end{bmatrix}
\hatB_{0,\lambda}
=0
$.
Since 
$
F_{-1}=\sum_{k=0}^d\hatA_{0,\lambda}^k\bbm 0\\\zeta'\ebm\eta_k
$,
we see that
$
\xi(\lambda I-A)\inv B+\sum_{k=0}^d \lambda^k\eta_k=0.
$
Since this equality  holds for all $\lambda\in\R$ that are not  eigenvalues of $A$, we  conclude that $\eta_i=0$ for $i\in[0,d]$ and 
$
\xi(\lambda I-A)\inv B=0
$. Since $(A,B)$ is controllable we conclude that $\xi=0$ and that $J_d(x)$ has full row rank. 

To prove that $J_{\ell+\mu}(x)$ has full row rank, note that $J_{T-1}(x)$ has at least $ 2 $ rows and exactly $ 1 $ column since $T\geq1$, $m\geq 1$, and $n\geq 1$. As such, it cannot have full row rank. Then, we see that $d=\min(\ell+\mu,T-1)\neq T-1$. Therefore, $T-1>\ell+\mu$, $d=\ell+\mu$, and $J_{\ell+\mu}(x)$ has full row rank. The latter implies that $T\geq\ell+\mu+(\ell+\mu+1)m+n$.\end{proof}

\subsection{Proof of Theorem~\ref{t:main}: necessity part}\label{sec:t:mainnec}
Assume that the data are informative for system identification in $\calS_{[L_-,L_+],[N_-,N_+]}\cap\calM$. Let 
$
\sa\in\syl{\lt}{\nt}\cap\calM
$
and let
$
x\in\R^{n\times(T+1)}
$
be a state for 
$
\sa
$.
Since 
$
\sy{\nt}\cap\calM
$
has the isomorphism property, we have
$
\sy{\nt}\cap\calM=\syl{\lt}{\nt}\cap\calM
$.
Then, Lemma~\ref{l:iso prop-nec} implies that 
\beq\label{e: T lower with lt}
T\geq\lt+(\lt+1)m+\nt
\eeq
and 
$J_{\lt}(x)$ has full row rank whereas Lemma~\ref{l: rank xu H and G}.\ref{l: rank xu H and G.2} implies that
$\delta_k=\rho_k
$
for every
$
k\in[0,\lt]
$. 
As $\rho_{\lt-1}\geq 1$ due to Lemma~\ref{p: sum rk}.\ref{p: sum rk.bb}, we see that $q\geq \lt$. Then, Theorem~\ref{t:nmin} implies that $\lm\geq\lt$. Since the reverse inequality readily follows from the definition of $\lm$ in \eqref{e:def lm}, we have $\lt=\lm$. Further, Theorem~\ref{t:nmin} and Lemma~\ref{p: sum rk}.\ref{p: sum rk.ee} imply that $\nt=\nm$. Then, \eqref{e:lb l} and \eqref{e:lb n} follow from \eqref{e:bounds on lt and nt}.

If $\la=\lt$, \eqref{e:lb T} readily follows from \eqref{e: T lower with lt} and \eqref{e:main rank condition} follows from $\rank H_{\lt}=\rank J_{\lt}(x)$ and $J_{\lt}(x)$ having full row rank. Suppose that $\la>\lt$. Note that the informativity of the data for system identification within $\calS_{[L_-,L_+],[N_-,N_+]}\cap\calM$ implies that $\syl{\lt+\mu}{\nt+\mu}\cap\calM=\emptyset$ where $\mu=\la-\lt$. Then, Lemma~\ref{l:extending towards la} implies that \eqref{e:lb T} holds and $J_{\la}(x)$ has full row rank. Since $\rank H_{\la}=\rank J_{\la}(x)$, we see that \eqref{e:main rank condition} holds.\EP
\subsection{Proposition~\ref{prop:wetal} vs. Theorem~\ref{t:main}}\label{sec:prop to main thm}
We want to show that the condition 
\beq\label{e:wetal cond}
\rank H_{L_++N_+-1}\left(\begin{bmatrix}
\duf\\\dyf
\end{bmatrix}\right)=(L_++N_+)m+\nt
\eeq
appearing in Proposition~\ref{prop:wetal} implies the conditions \eqref{e:lb l}, \eqref{e:lb n}, and \eqref{e:main rank condition} of Theorem~\ref{t:main}. Let $\sa\in\sy{\nm}$ and let $\dxf\in\R^{\nm\times(T+1)}$ be a state for $\sa$. Since $L_++N_+-1\geq \lm$, we see from \eqref{e:hankel data from system and state} and \eqref{e:wetal cond} that $\rank J_{L_++N_+-1}(x)=(L_++N_+)m+\nt$. This implies that $\nt\leq\nm $. Since the reverse inequality holds due to \eqref{e:def nm}, we see that $\nm=\nt$. Then, it follows from Theorem~\ref{t:nmin} that $\lm=\lt$. Therefore, \eqref{e:lb l}, \eqref{e:lb n}, and \eqref{e:main rank condition} are satisfied.

\section{Conclusions}\label{sec:conc}
We stated necessary and sufficient conditions for informativity for system identification in the class of minimal ISO systems whose lag and state dimension lie  between given lower/upper bounds (see Theorem~\ref{t:main}). To establish such result we obtained some intermediate ones of independent interest, most prominently the iterative construction of a state sequence and a corresponding ISO model in the proof of Theorem~ \ref{t:state construct}. 

We aim to apply the concept of informativity for system identification in a model class (see Definition \ref{def:IWPKC}) to other classes than minimal systems, e.g. to dissipative systems. We also plan to work on the application of the informativity concept to identification in the behavioral framework, where interesting  results (see \cite{Markovsky23}) have recently appeared.

\bibliographystyle{IEEEtran}
\bibliography{references}

\end{document}